\documentclass{amsart}
\usepackage[utf8]{inputenc}
\usepackage{amsfonts}
\usepackage{amsmath}
\usepackage{amssymb}
\usepackage{amsthm}
\usepackage{xcolor}
\usepackage{bm}
\usepackage{mathtools}
\usepackage{enumitem}
\usepackage{verbatim}
\usepackage{pinlabel}
\usepackage{caption}
\usepackage{subcaption}
\usepackage{tikz}
\usetikzlibrary{matrix,arrows,decorations.pathmorphing}
\usepackage{graphicx}
\usepackage{hyperref}
\usepackage{tikz-cd}
\usepackage{biblatex}
\usepackage{float}
\usepackage{geometry}
\geometry{margin=1.25in}
\usepackage{changepage}

\usepackage{xcolor}
\definecolor{RRR}{RGB}{198,0,31}
\definecolor{PPP}{RGB}{255,0,170}
\definecolor{BBB}{RGB}{0,0,221}
\definecolor{GGG}{RGB}{0,120,0}

\definecolor{DB}{RGB}{13,0,155}
\definecolor{DG}{RGB}{13,137,0}

\addbibresource{mainbib.bib}

\newtheorem{theorem}{Theorem}[section]

\newtheorem{lemma}[theorem]{Lemma}

\newtheorem{proposition}[theorem]{Proposition}

\newtheoremstyle{claim}% name
  {\topsep}% space above
  {\topsep}% space below
  {}% body font
  {}% indent amount
  {\itshape}% theorem head font
  {.}% punctuation after theorem head
  {.5em}% space after theorem head
  {\thmname{#1}\thmnumber{ #2}\thmnote{ (#3)}}% theorem head spec
\theoremstyle{claim}

\DeclareMathOperator{\interior}{int}
\DeclareMathOperator{\id}{id}
\DeclareMathOperator{\Homeo}{Homeo}
\DeclareMathOperator{\Diffeo}{Diff}
\DeclareMathOperator{\MCG}{MCG}

\DeclareMathOperator{\Aut}{Aut}
\DeclareMathOperator{\link}{link}

\newcommand{\fine}{{\mathcal{C}^\dagger}}

\newcommand{\surf}{S_{g}^n}
\newcommand{\weirdarc}{\mathcal{A}_{\partial,\geq2\times}}

\newcommand{\pit}[1]{\medskip\noindent\textit{#1}\textit{.}}

\newcommand{\adj}{{-}}

\newcommand{\p}[1]{\medskip\noindent\textbf{#1}\textbf{.}}

\title{Automorphisms of fine curve graphs of planar surfaces}
\author{Roberta Shapiro, Rohan Wadhwa, Arthur Wang, and Yuchong Zhang}
%\date{January 2023}
%\email{shaprh@umich.edu} we should figure out how to do this

\begin{document}

\begin{abstract}
   The fine curve graph of a surface is the graph whose vertices are simple closed essential curves in the surface and whose edges connect disjoint curves. In this paper, we prove that the automorphism group of the fine curve graph of a surface is naturally isomorphic to the homeomorphism group of the surface for boundaryless planar surfaces with at least 7 punctures.
   \end{abstract}
\maketitle

\section{Introduction}

Let $S=\surf$ be a boundaryless orientable surface of genus $g$ with $n$ punctures. Unless otherwise stated, we will assume that $g=0$ and $n\geq 7.$ The fine curve graph of $S,$ denoted $\fine(S),$ is the graph whose vertices are essential simple closed curves and whose edges connect pairs of curves that are disjoint. We prove the following theorem.

\begin{theorem}\label{maintheorem}
    Let $S=S_0^n$ be a boundaryless orientable surface with $n\geq 7.$ Then the natural map
    \[\Phi:\Homeo(S)\to \Aut \fine(S)\]
    is an isomorphism.
\end{theorem}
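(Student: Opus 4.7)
The plan is to prove that $\Phi$ is both injective and surjective, with essentially all the work going into surjectivity.

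\textbf{Injectivity.} If $f \in \Homeo(S)$ lies in the kernel of $\Phi$, then $f$ preserves every essential simple closed curve setwise. For any non-puncture point $p \in S$, choose two essential simple closed curves $\alpha, \beta$ meeting transversely in the single point $p$. Then
\[\{f(p)\} = f(\alpha \cap \beta) = f(\alpha) \cap f(\beta) = \alpha \cap \beta = \{p\},\]
so $f$ fixes every non-puncture point. Continuity forces $f = \id$.

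\textbf{Surjectivity, reduction to isotopy.} Given $\varphi \in \Aut \fine(S)$, I would first show that $\varphi$ descends to an automorphism $\bar\varphi$ of the ordinary curve graph $\mathcal{C}(S)$ of isotopy classes. The key step is a combinatorial characterization, purely within $\fine(S)$, of when two disjoint curves $\alpha, \alpha'$ are isotopic --- typically, that they cobound an unpunctured annulus, a property witnessed by the existence of suitable auxiliary curves disjoint from both and filling the complementary regions. Once $\bar\varphi$ is defined, Korkmaz's theorem $\Aut \mathcal{C}(S_0^n) \cong \MCG^{\pm}(S_0^n)$ for $n \geq 5$ (which applies since $n \geq 7$) produces a homeomorphism $f_0$ inducing $\bar\varphi$. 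Replacing $\varphi$ by $\Phi(f_0)^{-1} \circ \varphi$, we may assume $\varphi$ fixes every isotopy class of essential simple closed curve.

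\textbf{Building the homeomorphism.} Now I need to produce $f_\varphi \in \Homeo(S)$ with $\Phi(f_\varphi) = \varphi$. For each non-puncture $p \in S$, choose a nested sequence of essential simple closed curves $\gamma_1, \gamma_2, \ldots$ bounding disks $D_1 \supset D_2 \supset \cdots$ with $\bigcap_i D_i = \{p\}$; since $\varphi$ fixes isotopy classes, the curves $\varphi(\gamma_i)$ bound a nested family of disks whose intersection should collapse to a single point, which I define to be $f_\varphi(p)$. One then verifies, in turn, that $f_\varphi$ is well-defined (independent of the nested family chosen), bijective (by running the same construction for $\varphi^{-1}$), continuous (using the nesting of the disks), and actually induces $\varphi$ on every vertex of $\fine(S)$. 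On punctures, $f_\varphi$ is determined by the isotopy-class data already recorded in stage one.

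\textbf{Main obstacle.} The technical heart of the argument is expressing basic geometric notions --- isotopy of disjoint curves, a curve bounding an unpunctured disk, nesting of such disks, and the intersection of a nested sequence being a single point --- in the adjacency language of $\fine(S)$ alone, so that each of these notions is automatically preserved by any $\varphi \in \Aut \fine(S)$. The hypothesis $n \geq 7$ enters precisely here: it provides enough punctures to exhibit the combinatorial witnesses (curves of the correct topological type filling complementary components) required to distinguish these configurations. Establishing continuity of $f_\varphi$ is the subtlest point, since for a sequence $p_k \to p$ one must coordinate the nested families defining $f_\varphi(p_k)$ with one defining $f_\varphi(p)$ so their $\varphi$-images converge. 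I expect these combinatorial lemmas, rather than the overall three-step structural argument, to dominate the paper.
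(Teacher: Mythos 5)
Your proposal takes a genuinely different route from the paper (which extends $\varphi$ to the extended fine curve graph $\mathcal{E}\fine(S)$ by encoding each inessential curve as a bigon pair of essential curves, proving bigon pairs and sharing pairs are $\Aut\fine(S)$-invariant, and then quoting the Long--Margalit--Pham--Verberne--Yao isomorphism $\Homeo(S)\cong\Aut\mathcal{E}\fine(S)$), but as written it has two gaps specific to planar surfaces, one of them fatal. First, the injectivity argument: on $S_0^n$ every simple closed curve separates, so two transverse simple closed curves always meet in an even number of points, and no pair $\alpha,\beta$ with $\alpha\cap\beta=\{p\}$ exists. (The paper explicitly flags the absence of such ``torus pairs'' as a difference from the genus $\geq 2$ case.) This is repairable --- for instance, pin $p$ down as the point of $\alpha\cap\beta$ separated from the other intersection point by a third invariant curve --- but the argument as stated does not run.

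Second, and fatally, the point-encoding step: an essential curve in $S_0^n$ must have at least two punctures on each side, so a nested sequence of essential curves bounding disks $D_1\supset D_2\supset\cdots$ cannot satisfy $\bigcap_i D_i=\{p\}$ for a non-puncture point $p$. Each $D_i$ surrounds at least two punctures; that set of surrounded punctures is non-increasing in a finite set and hence stabilizes, so the nested intersection of the closures is a continuum joining at least two puncture locations and is never a single interior point. This is precisely the obstruction the whole paper is organized around: points (equivalently, inessential curves) cannot be detected by shrinking essential curves, so the paper instead detects an inessential curve as the symmetric difference of a \emph{bigon pair} --- two homotopic essential curves meeting in a nontrivial interval --- shows this encoding and its well-definedness (sharing pairs) are preserved by automorphisms, and delegates the passage from curves-plus-points to a homeomorphism to the already-established theorem for $\mathcal{E}\fine(S)$. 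Your opening reduction via Korkmaz's theorem is a reasonable alternative first step, but the construction of $f_\varphi$ from the resulting isotopy-trivial automorphism must be replaced by something like the bigon-pair mechanism rather than nested essential disks.
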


Theorem~\ref{maintheorem} is an extension of the theorem of Long--Margalit--Pham--Verberne--Yao, which states the analogous result for compact and boundaryless surfaces of genus at least 2 \cite{LMPVY}. More generally, our theorem is another``fine" analogue of the theorems of Ivanov \cite{Ivanov}, Korkmaz \cite{Korkmaz}, and Luo \cite{luo} that the extended mapping class group of a surface is naturally isomorphic to the automorphisms of its curve graph. (The \emph{curve graph} of a surface is a graph whose vertices are isotopy classes of essential simple closed curves and whose edges connect classes that admit disjoint representatives.)

%\p{(Fine) curve graphs, past and present} The \emph{curve graph} of a surface is a graph whose vertices are isotopy classes of essential simple closed curves and whose edges connect classes that admit disjoint representatives. %Curve graphs and their variants are commonly used to study the \emph{mapping class groups} of surfaces, which are groups of homeomorphisms modulo isotopy. In fact, 
%Ivanov \cite{Ivanov}, Korkmaz, \cite{Korkmaz}, and Luo \cite{luo} prove that the extended mapping class group of a surface is naturally isomorphic to the automorphisms of the surface's curve graph.

Fine curve graphs were first introduced by Bowden--Hensel--Webb for compact, boundaryless surfaces in the case that vertices represent smooth curves; Bowden--Hensel--Webb use fine curve graphs to show that $\Diffeo_0(S_g^0)$ admits unbounded quasimorphisms, and as such is not uniformly perfect \cite{Bowden_Hensel_Webb_2021}. Since then, Long--Margalit--Pham--Verberne--Yao have proven that the homeomorphism group of an orientable, compact, boundaryless surface of genus at least 2 is naturally isomorphic to the automorphism group of the fine curve graph of said surface. A similar result was shown for non-orientable surfaces of genus at least $4$ by Kimura--Kuno \cite{kk}.

Even more recently, people have explored the automorphism groups of fine curve graph variants. Fine curve graphs with edges not corresponding to disjointness were introduced in Le~Roux--Wolff \cite{LRW} and Booth--Minahan--Shapiro \cite{BMS} and were further explored by Shapiro \cite{shapiro}; all three papers study automorphism groups of such graphs. Booth also studies automorphism groups of graphs whose vertices correspond to curves with certain differentiability conditions \cite{booth1} \cite{booth2}.

\p{Proof outline} The proof of Theorem~\ref{maintheorem} is summarized by the following commutative diagram.
\begin{center}
    \begin{tikzcd}
 \Aut \fine(S) \arrow{r}{\Psi} & \Aut \mathcal{E}\fine(S) \arrow{r}{\nu^{-1}} &\Homeo(S) \arrow[bend left]{ll}{\Phi}
 \end{tikzcd}
 \end{center}
We factor through the extended fine curve graph, denoted $\mathcal{E}\fine(S),$ just as Long--Margalit--Pham--Verberne--Yao do \cite{LMPVY}, and show that automorphisms of the fine curve graph induce automorphisms of the extended fine curve graph via the map $\Psi$ above. The main idea is that the extended fine curve graph of a surface includes vertices corresponding to inessential curves, which ultimately encode points on $S.$ We use Theorem~1.2 of Long--Margalit--Pham--Verberne--Yao, paraphrased as Theorem~\ref{theorem:efcg}, stating that the natural map $\nu:\Homeo(S)\to \Aut\mathcal{E}\fine(S)$ is an isomorphism.

Leveraging this, we show that inessential curves are encoded by pairs of essential curves, called bigon pairs, and that automorphisms of the fine curve graph preserve both bigon pairs and the property of bigon pairs encoding the same inessential curves.

\p{Distinctions from the non-planar cases} The first and clearest distinction we must account for when considering surfaces with punctures is that we now have inessential curves that are homotopic to a puncture. Although we do not consider such curves as vertices in the extended fine curve graph, we have to account for them in our definitions of sides of curves, hulls, bigon pairs, and pants pairs in Section~\ref{sec:configs}. This makes our definitions different from those used by Long--Margalit--Pham--Verberne--Yao \cite{LMPVY} and Kimura--Kuno \cite{kk}. 

When it comes to curves, there is a clear distinction between planar and non-planar surfaces: planar surfaces have only separating curves (curves which, when removed, break the surface into two connected components). This leads us to have a different set of relevant configurations of curves, as we no longer have torus pairs, which are formed by curves which minimally intersect exactly once. Thus we define other curve configurations in Section~\ref{sec:configs}. Moreover, although we follow a similar outline to Long--Margalit--Pham--Verberne--Yao, the steps to each proof are different. In particular, we define an unusual arc graph in Section~\ref{sec:configs} that arises from these distinctions.

\medskip\noindent\textit{Why no boundary?} Traditionally, many people study the group of homeomorphisms that fix the boundary pointwise, $\Homeo(S,\partial S).$ Since curves cannot intersect boundary components, we end up having symmetries of $S$ that are not actually homeomorphisms. For example, imagine a plane with 10 boundary components placed in a symmetric way about the $y$-axis; a reflection over the $y$-axis would induce an automorphism of the curve graph of this surface, but would not be induced by a homeomorphism as the reflection does not fix the boundary pointwise. Thus, there would be some difficulty with defining what types of homeomorphisms we would be working with when it comes to a surface with boundary.

\pit{Why $n\geq 7$} We take $n\neq 1,2,3$ due to the lack of essential simple closed curves and $n\neq 4$ since, under the normal definition of the edges representing disjointness, the fine curve is disconnected. However, the proof enclosed only applies in its entirety to surfaces with $n\geq 7.$ This bound might not be sharp. Each intermediate result in this paper includes the tightest bound for which the proof works. 

\p{Acknowledgments} Thank you to Katherine Booth, Jacob Guynee, and Carlos P\'erez Estrada for conversations about aspects of the proof. Thank you to Alex Nolte for comments on a draft of this paper. This work was done partially through the LoG(M) program at the University of Michigan.

\section{Configurations of curves}\label{sec:configs}

%In this section, we build up the steps to proving Theorem~\ref{maintheorem} and culminate in its proof.

To begin, we recall and expand on several definitions and results of Long--Margalit--Pham--Verberne--Yao that are still true in the case of planar surfaces.

\pit{Sides} Let $A=\{a_1,\ldots,a_n\}$ be a collection of curves in $S$. Then the \emph{sides} of $A$ are the connected components of $S\setminus A$. A curve $b$ is \emph{on one side} of $A$ if $b\setminus A$ is contained in a single connected component of $S\setminus A$ that is neither a disk nor a punctured disk. Curves $b$ and $c$ are \emph{on the same side} of $A$ if they are both on one side of $A$ and $(b\cup c)\setminus A$ are in the same connected component of $S\setminus A.$ Otherwise, they are \emph{on opposite sides.} We say a curve $a$ \emph{separates} curves $b$ and $c$ if $b$ and $c$ are on opposite sides of $a.$ Finally, intersecting curves $a$ and $b$ are \emph{noncrossing} if $a$ is on one side of $b$ (and vice versa) and \emph{crossing} otherwise.

Let $G$ be a graph. We denote the set of vertices of $G$ by $V(G).$ If $x,y\in V(G),$ we will notate that they are adjacent by writing $x\adj y.$ Define the \emph{link} of the set of vertices $A$ of a graph $G$, denoted $\link(A),$ to be the subgraph induced by the set $\{v\in V(G) | v\adj a \text{ for all }a \in A\}$.

\begin{proposition}[Sides of curves. Generalization of Lemma 2.3 of \cite{LMPVY}]\label{prop:sides}
    Let $S=S_{0}^n$ be a surface with $n\geq 5$ and $A$ be a collection of curves in $S.$ Then automorphisms of $\fine(S)$ preserve the sides of $A.$ That is, for all $\varphi\in\Aut \fine(S)$ and curves $a,c\in V(\fine(S))\setminus A,$ $a$ and $c$ are on the same side of $A$ if and only if $\varphi(a)$ and $\varphi(c)$ are on the same side of $\varphi(A).$

    Moreover, if $a$ is on one side of $A,$ then $\varphi(a)$ is on one side of $\varphi(A).$
\end{proposition}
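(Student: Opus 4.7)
The plan is the Ivanov-style strategy: I will show that the relations ``$a$ is on one side of $A$'' and ``$a$ and $c$ are on the same side of $A$'' each admit a purely graph-theoretic characterization in terms of $\fine(S)$ alone. Once such characterizations are established, both statements in Proposition~\ref{prop:sides} follow automatically, since any $\varphi\in\Aut\fine(S)$ preserves $\fine(S)$ and its induced subgraph structure.

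The first step is to handle the case where $a$ and $c$ both lie in $\link(A)$, that is, where they are disjoint from $A$. Here the proposed characterization is: $a$ and $c$ are on the same side of $A$ if and only if they belong to the same connected component of the subgraph of $\fine(S)$ induced by $\link(A)$. One direction is essentially tautological: an edge in this induced subgraph connects two disjoint curves in $\link(A)$, which necessarily lie in the same component of $S\setminus A$, and a path propagates this by transitivity. The reverse direction carries the real content: if $a$ and $c$ both lie in a component $X$ of $S\setminus A$, one must connect them by a path of essential curves in $X$ where consecutive curves are disjoint, which reduces to a connectivity statement about the induced subgraph of $\fine(S)$ on essential curves contained in $X$.

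To extend the characterization to curves $a,c$ that intersect $A$, I would exploit the key planar feature that every simple closed curve in $S$ is separating, so that each $a_i\in A$ cuts $S$ into two canonical halves. Then $a$ lying on one side of $A$ can be characterized as a coordinated collection of one-sided conditions with respect to each individual $a_i\in A$, each of which can in turn be certified graph-theoretically by an auxiliary curve $b\in\link(A)$ disjoint from $a$ that ``witnesses'' which side of $a_i$ the curve $a$ is on. Same-side conditions for $a$ and $c$ would similarly reduce to finding a common witness $b$ disjoint from $a$, $c$, and $A$ lying in the appropriate connected component of the induced subgraph on $\link(A)$.

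The main obstacle I anticipate is proving the connectivity statement underlying the base case: for each non-degenerate component $X$ of $S\setminus A$, the subgraph of $\fine(S)$ induced by essential simple closed curves contained in $X$ is connected. For components with sufficiently many punctures this can be done by explicit construction of paths of disjoint fine curves, but the small cases --- twice- and thrice-punctured disks, and disk or once-punctured disk components that contain no essential curves at all --- require careful case analysis. It is in navigating these degenerate configurations, and in ensuring enough global room in $S$ to produce the auxiliary witnesses used for curves intersecting $A$, that the hypothesis $n\geq 5$ will enter the argument.
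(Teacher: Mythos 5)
There is a genuine error at the base of your argument. Your proposed characterization for $a,c\in\link(A)$ --- ``same side of $A$ iff same connected component of the subgraph induced by $\link(A)$'' --- is backwards, and the direction you call tautological is false: two curves in $\link(A)$ lying in \emph{different} components of $S\setminus A$ are automatically disjoint (each is disjoint from $A$ and they live in disjoint open sets), hence \emph{adjacent} in $\fine(S)$. Since every curve in a planar surface separates, $\link(A)$ is highly connected across sides, so your invariant cannot distinguish sides at all. The correct combinatorial structure, which the paper imports from Long--Margalit--Pham--Verberne--Yao, is that $\link(A)$ is a \emph{join} whose parts correspond to the components of $S\setminus A$: being on the same side corresponds to lying in the same part of the join, i.e.\ to connectivity in the \emph{complement} (intersection) graph on $\link(A)$, not in the disjointness graph. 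Relatedly, the connectivity statement you flag as the main obstacle (connectivity of the induced disjointness subgraph on curves in a component $X$) is not the one you need; what matters is connectivity of the intersection graph within each part.

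Your second step (curves meeting $A$) is also considerably more complicated than necessary and is left underspecified: it is not clear how the ``coordinated collection of one-sided conditions with respect to each $a_i$'' would be assembled into a single graph-theoretic invariant. The paper's route is much more direct: for arbitrary $a,c\notin A$, the curves $a$ and $c$ are on the same side of $A$ if and only if there exists $d\in\link(A)$ intersecting both ($d$ exists by isotoping a curve in the common component; conversely any curve meeting curves on opposite sides must cross $A$). The ``one side'' claim then follows from the join structure: if $a$ is on one side, exactly one part of the join contains a curve intersecting $a$. I would recommend reworking your write-up around the join decomposition and this intersection-witness characterization.
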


\begin{proof}
    The case that $A$ is a multicurve and $a,c\in \link(A)$ is done by Long--Margalit--Pham--Verberne--Yao \cite{LMPVY}. Their proof holds in our case as well, since $\link(A)$ is a join whose parts correspond to curves contained in the connected components of $S\setminus A.$

    We will show that $a$ and $c$ are on the same side of $A$ if and only if there exists a curve $d\in\link(A)$ that intersects both $a$ and $c.$ 

    Suppose $a$ and $c$ are on the same side of $A.$ Let $d\in S\setminus A$ be an essential curve in the same connected component of $S\setminus A$ as $a\setminus A$ and $c\setminus A.$ We can isotope $d$ to intersect both $a$ and $c.$

    Conversely, suppose $a$ and $c$ are on different sides of $A.$ Then any curve that intersects both $a$ and $c$ must also intersect some curve in $A,$ meaning no curve in $\link(A)$ intersects both $a$ and $c.$

\medskip \noindent
    Now suppose $a$ is on one side of $A.$ Then one part of the join in $\link(A)$ corresponds to the connected component of $S\setminus A$ that contains $A\setminus a.$ There exists a curve $c$ in this part that intersects $a.$ However, all curves in all other parts of the join are disjoint from $a$ since they are disjoint from $A.$ Since sides of joins are preserved by automorphisms, so is the property of being on one side of $A.$
\end{proof}

\pit{Extended fine curve graph} The \emph{extended fine curve graph} of a surface $S,$ denoted $\mathcal{E}\fine(S)$, is the graph whose vertices are simple closed curves (not necessarily essential) and whose edges connect disjoint curves. Long--Margalit--Pham--Verberne--Yao provide the first written proof that, for surfaces without boundary, $\Aut \mathcal{E}\fine(S) \cong \Homeo(S)$ \cite[Theorem 1.2]{LMPVY}. (We note that the original unpublished proof is due to Farb--Margalit \cite{fm}.) We record this theorem below.

\begin{theorem}[Theorem 1.2 of \cite{LMPVY}]\label{theorem:efcg}
    For any surface $S$ without boundary, the natural map \[\nu:\Homeo(S) \to \mathcal{E}\fine(S)\] is an isomorphism.
\end{theorem}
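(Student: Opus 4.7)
The plan is to prove injectivity and surjectivity of $\nu$ separately, using that inessential simple closed curves — and especially nested families of them — encode points of $S$.

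For injectivity, suppose $f \in \Homeo(S)$ fixes every simple closed curve setwise. Given any $p \in S$, I would exhibit a nested sequence of inessential curves $c_1, c_2, \ldots$ bounding disks $D_1 \supset D_2 \supset \cdots$ with $\bigcap_i D_i = \{p\}$; here each $D_i$ is the unique disk component of $S \setminus c_i$, distinguishable topologically from the other side (which has positive genus or several punctures, except in the sphere case where one argues by a small tweak using inessential curves that bound disks containing a distinguished puncture). Since $f$ fixes each $c_i$ setwise and must preserve the disk side, $f$ preserves each $D_i$, hence fixes $p$. As $p$ was arbitrary, $f = \id$.

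For surjectivity, given $\varphi \in \Aut \mathcal{E}\fine(S)$ I would construct $f \in \Homeo(S)$ with $\nu(f) = \varphi$ in three steps. First, I would combinatorially characterize inessential curves inside $\mathcal{E}\fine(S)$ — for instance through the join structure of their link, in which one factor consists of curves trapped inside an open disk and contains no two disjoint essential curves — so that $\varphi$ preserves inessentiality. Second, I would combinatorially detect nesting of the disks bounded by disjoint inessential curves: for example, one can characterize $D_a \subset D_b$ via a condition on the set of curves disjoint from both $a$ and $b$. This promotes $\varphi$ to an inclusion-preserving action on the poset of inessential disks. Third, I would define $f(p) = \bigcap_i D_{\varphi(c_i)}$ for any nested sequence $(c_i)$ shrinking to $p$, and verify that $f$ is well-defined (the image intersection is a single point, independent of the chosen sequence), bijective (by applying the symmetric construction to $\varphi^{-1}$), continuous, and that $\nu(f) = \varphi$ on every curve. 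The last point follows because $f$ carries the pointset of a curve $c$ onto the pointset of $\varphi(c)$, verified by approximating each point of $c$ by nested inessential disks straddling it.

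The main obstacle I expect is continuity of $f$, i.e., showing that if a nested sequence of inessential disks shrinks onto a point, then so does its image under $\varphi$. This requires converting the purely combinatorial action of $\varphi$ into quantitative geometric control on disk diameters, typically handled either by a compactness/diagonal argument or by giving a combinatorial characterization of precisely which nested sequences of inessential curves shrink to a single point.
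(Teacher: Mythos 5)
The first thing to say is that the paper does not prove this statement at all: it is Theorem~1.2 of Long--Margalit--Pham--Verberne--Yao (with the original unpublished argument attributed to Farb--Margalit), quoted and used as a black box, so there is no internal proof to compare yours against. (As a side remark, the codomain in the displayed map should read $\Aut\mathcal{E}\fine(S)$, as it does in the introduction.) Measured against the actual cited argument, your outline follows the same broad strategy: points of $S$ are encoded by nested shrinking sequences of inessential curves, and an automorphism of $\mathcal{E}\fine(S)$ is upgraded to a point map by combinatorially detecting (a) inessentiality, (b) nesting of the bounded disks, and (c) which nested sequences shrink to a single point. Your injectivity argument is essentially correct.

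The surjectivity half, however, is a plan rather than a proof, and the gaps are exactly at the load-bearing points. All three combinatorial characterizations are asserted rather than supplied, and the one you do sketch is circular: you cannot identify the disk factor of $\link(c)$ by saying it ``contains no two disjoint essential curves'' when essentiality is precisely what has not yet been characterized inside $\mathcal{E}\fine(S)$ (and the disk factor in any case contains many pairs of disjoint inessential curves). The genuine crux is step (c). A nested intersection $\bigcap_i D_i$ of closed disks is always a nonempty continuum but need not be a point --- it can be an arc or a more complicated continuum containing no simple closed curve --- so the criterion cannot be as simple as ``no curve lies in every $D_i$,'' and without such a criterion your $f$ is not even well-defined as a map on points (you need $\bigcap_i D_{\varphi(c_i)}$ to be a singleton, not merely $\bigcap_i D_i$), let alone continuous. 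This is where the real content of the cited theorem lives, and you correctly flag it as the obstacle but do not resolve it. Finally, the statement covers all boundaryless surfaces, including $S^2$ (where both complementary components of every curve are disks, so ``the disk side'' is ambiguous) and surfaces such as $\mathbb{R}^2$ where every curve is inessential; these degenerate cases would need separate treatment in your scheme.
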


\subsection{Quasi-homotopic curves, homotopic curves, and pants pairs} 

In this section, we will characterize several possible configurations of curves in both topological and combinatorial ways. The types of curves we will work with are quasi-homotopic pairs, homotopic pairs, and pants pairs. Unless otherwise stated, we will assume that all curves are essential.

\pit{Disjoint quasi-homotopic pairs and homotopic pairs} A disjoint pair of curves is \emph{quasi-homotopic} if the curves are either homotopic or cobound a pair of pants. (Alternately, the two curves would be homotopic if one puncture were filled in.) %{\color{red}A pair of curves $a,b$ is quasi-homotopic if there exists a curve $a'$ homotopic to $a$ that is disjoint from and quasi-homotopic to $b.$ WILL WE NEED THIS???? CHECK BACK}

\begin{lemma}\label{lemma:disjointQH}
    Let $S=S_0^n$ with $n\geq 6$ and suppose $\varphi\in \Aut \fine(S).$ Then disjoint curves $x,y$ in $S$ are quasi-homotopic if and only if $\varphi(x),\varphi(y)$ are quasi-homotopic.
\end{lemma}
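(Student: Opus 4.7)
\pit{Proof plan}
The strategy is to characterize the property ``disjoint and quasi-homotopic'' combinatorially in $\fine(S)$, using only adjacency and side relations, and then conclude preservation by $\varphi$ from Proposition~\ref{prop:sides}. Since $\varphi$ is a graph automorphism, adjacency is automatically preserved: $\varphi(x)$ and $\varphi(y)$ are disjoint whenever $x$ and $y$ are.

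In the planar setting, both $x$ and $y$ are separating, so $S\setminus(x\cup y)$ has three components. Exactly one of them, the \emph{middle region} $M$, has both $x$ and $y$ in its closure; the pair $(x,y)$ is quasi-homotopic precisely when $M$ is an annulus or a pair of pants, equivalently when $M$ contains at most one puncture. By Proposition~\ref{prop:sides}, $\varphi$ preserves the decomposition of $\link(\{x,y\})$ into sides, and in particular sends essential curves lying in $M$ bijectively to essential curves lying in the corresponding middle region $M'$ of $(\varphi(x),\varphi(y))$. This reduces the problem to detecting ``$M$ has at most one puncture'' from the adjacency and side relations of $\fine(S)$.

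The characterization I would attempt is the following: $M$ contains at least two punctures if and only if there exist pairwise disjoint essential curves $z_1,z_2$, both contained in $M$ (that is, on the same side of $x$ as $y$ and on the same side of $y$ as $x$), each separating $x$ from $y$, and whose non-isotopy is witnessed by an auxiliary essential curve $w$ whose side relations with $x,y,z_1,z_2$ certify that a puncture lies in each of the three nested subregions of $M$ cut out by $z_1$ and $z_2$. Every clause here is a statement about adjacency and sides, so Proposition~\ref{prop:sides} transfers the characterization along $\varphi$ and yields the lemma.

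The main obstacle is the ``witness'' clause—translating non-isotopy of the $z_i$ into purely combinatorial language. In the non-planar setting \cite{LMPVY} bypass this using torus pairs, but planar surfaces admit no such configurations, so a different device is needed. I expect the substitute to build nested chains of curves separating $x$ from $y$ inside $M$ together with witness curves in the outer regions $L$ and $R$; the hypothesis $n\geq 6$ ensures that $L$ and $R$ each contain at least two punctures and that enough additional curves exist to play the role of $w$ whenever $M$ has two or more punctures of its own.
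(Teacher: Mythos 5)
Your write-up is a proof plan rather than a proof: the entire content of the lemma is the combinatorial characterization of ``disjoint and quasi-homotopic,'' and that is precisely the piece you leave open (``the characterization I would attempt,'' ``I expect the substitute to build nested chains\ldots''). The witness clause you flag as the main obstacle is not an incidental detail to be filled in later; without it nothing has been shown to be $\varphi$-invariant. Moreover, the specific criterion you sketch is off by one: demanding a puncture in each of the three nested subregions of $M$ cut out by $z_1$ and $z_2$ detects that $M$ contains at least \emph{three} punctures, whereas failure of quasi-homotopy only guarantees at least two. A pair $x,y$ whose middle region contains exactly two punctures is not quasi-homotopic but would satisfy the negation of your criterion, so even if the witness clause were made combinatorial the characterization would misclassify this case.

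The actual argument is much lighter and needs no device for certifying non-isotopy of curves. The paper's criterion is: disjoint $x$ and $y$ are quasi-homotopic if and only if one part of the join $\link(x,y)$ consists entirely of curves that separate $x$ and $y$. ``Separates'' is already a statement about sides of a single curve, hence preserved by Proposition~\ref{prop:sides}, and the join decomposition of $\link(x,y)$ is preserved automatically. For the forward direction, every essential curve in the annulus or pair of pants cobounded by $x$ and $y$ is homotopic to $x$ or to $y$ (the third boundary of the pants is puncture-parallel, hence not a vertex of $\fine(S)$), so it separates them. For the converse, each component of $S\setminus(x\cup y)$ contains at least two punctures (the outer components because $x$ and $y$ are essential, the middle one because the pair is not quasi-homotopic), so each part of the join contains a curve surrounding exactly two punctures of that component, and such a curve does not separate $x$ from $y$. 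You should aim for a characterization of this direct, checkable form rather than one that requires auxiliary certificates of non-isotopy.
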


\begin{proof}
    We will show that adjacent curves $x$ and $y$ are quasi-homotopic if and only if one side of the join in $\link(x,y)$ only consists of curves that separate $x$ and $y$. 
    
    Suppose $x$ and $y$ are disjoint quasi-homotopic curves. Then they cobound an annulus or a pair of pants. Since every essential curve in the annulus or pair of pants cobounded by $x$ and $y$ is homotopic to either $x$ or $y,$ it must separate $x$ and $y.$
        
    Conversely, suppose that two adjacent curves $x$ and $y$ are not quasi-homotopic. Then in each component of $S\setminus(x\cup y)$, there exists at least two punctures. We can therefore find a curve in each component of $S\setminus(x\cup y)$ that surrounds two punctures. Thus there exists a curve in each part of the join in $\link(x,y)$ that does not separate $x$ and $y.$
\end{proof}

\begin{proposition}\label{prop:homotopic}
    Let $S=S^n_{0}$ be a surface with $n\geq 6.$ Let $x$ and $y$ be curves in $S$ and let $\varphi\in \Aut\fine(S).$ Then $x$ and $y$ are homotopic if and only $\varphi(x)$ and $\varphi(y)$ are homotopic.
\end{proposition}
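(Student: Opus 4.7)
The plan is to first establish the proposition for disjoint pairs via a combinatorial characterization, then extend to intersecting homotopic pairs through a chain of disjoint homotopic curves.

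For the disjoint case, by Lemma~\ref{lemma:disjointQH}, automorphisms preserve being disjoint and quasi-homotopic, so the task reduces to distinguishing annulus-cobounding pairs from pair-of-pants-cobounding pairs. The key characterization we aim to prove is: disjoint essential simple closed curves $x, y$ cobound an annulus if and only if for every essential simple closed curve $c$ disjoint from both, the pair $\{x, c\}$ is quasi-homotopic exactly when $\{y, c\}$ is. This condition is manifestly preserved by $\varphi$ because it only references disjointness (preserved on edges) and quasi-homotopy (Lemma~\ref{lemma:disjointQH}). To prove the characterization, we case on the location of $c$ among the three components of $S \setminus (x \cup y)$ and carry out a puncture count. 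When $\{x, y\}$ cobound an annulus, the region cobounded by $c$ with $x$ and that cobounded by $c$ with $y$ differ only by an empty annulus, so their puncture counts agree and their quasi-homotopy statuses coincide. When the middle region instead contains at least one puncture, we exhibit a witness $c$ in one of the two outer components of $S \setminus (x \cup y)$: taking $c$ to enclose all but one of the punctures of an outer component makes $c$ quasi-homotopic via a pair of pants to exactly one of $x$ and $y$ and not the other. The hypothesis $n \geq 6$ ensures that at least one of the two outer components has enough punctures (at least three) for such a witness to be an essential simple closed curve.

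For the general (possibly intersecting) case, we use the topological fact that any two homotopic essential simple closed curves in $S$ can be connected by a finite chain $x = z_0, z_1, \ldots, z_k = y$ of consecutively disjoint homotopic curves. Applying $\varphi$ term by term, invoking the disjoint case on each consecutive pair, and using transitivity of homotopy then yields $\varphi(x) \simeq \varphi(y)$.

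The main obstacle is the witness construction in the disjoint case: one must verify the essentiality constraints on $c$ in each sub-case of the non-annulus configurations, and the bound $n \geq 6$ is precisely what ensures at least one outer component has enough punctures to accommodate the witness.
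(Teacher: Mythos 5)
Your proposal is correct and follows essentially the same route as the paper: the disjoint case is handled by the same quasi-homotopy witness characterization (reduce to quasi-homotopic pairs via Lemma~\ref{lemma:disjointQH}, then exhibit a curve in the larger outer component enclosing all but one of its punctures, using $n\geq 6$), and the intersecting case by a chain of consecutively disjoint homotopic curves. The only substantive difference is that the existence of such a chain in $\fine(S)$ is a nontrivial result the paper attributes to Bowden--Hensel--Webb, whereas you assert it as a ``topological fact'' without justification.
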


\begin{proof}
    Let $x$ and $y$ be disjoint. We claim that $x$ is homotopic to $y$ if and only if they are quasi-homotopic and for all $z\in \link(x,y),$ $z$ is quasi-homotopic to $x$ if and only if $z$ is quasi-homotopic to $y.$ 

    If $x$ is homotopic to $y,$ then any curve disjoint from both $x$ and $y$ is quasi-homotopic to both or neither by the definition of quasi-homotopic.

    Conversely, suppose $x$ is not homotopic to $y.$ Since $n\geq 6,$ without loss of generality, the connected component of $S\setminus x$ not containing $y$ is not a pair of pants. Then the side of $x$ that does not contain $y$ has a curve that is disjoint from and quasi-homotopic to $x$ but not to $y$. 

    \medskip\noindent Suppose now that $x$ and $y$ are not disjoint but are homotopic. By work of Bowden--Hensel--Webb, there is a path between $x$ and $y$ in $\fine(S)$ consisting solely of curves homotopic to $x$ and $y,$ completing the proof \cite{Bowden_Hensel_Webb_2021}.
\end{proof}

A pair of curves $\{x,y\}$ is quasi-homotopic if there exist curves $x'$ homotopic to $x$ and $y'$ homotopic to $y$ such that $x'$ and $y'$ are disjoint and quasi-homotopic. This definition, combined with Proposition~\ref{prop:homotopic}, implies the following lemma.

\begin{lemma}\label{lemma:QH}
    Let $S=S_0^n$ with $n\geq 6$ and suppose $\varphi\in \Aut \fine(S).$ Then curves $x$ and $y$ in $S$ are quasi-homotopic if and only if $\varphi(x)$ and $\varphi(y)$ are quasi-homotopic.
\end{lemma}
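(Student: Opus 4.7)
The plan is to chain together the two preceding results (Lemma~\ref{lemma:disjointQH} and Proposition~\ref{prop:homotopic}) in the most direct way possible, using the given definition of quasi-homotopic pairs as an existential statement about disjoint representatives. Since the definition of quasi-homotopy is symmetric under replacing $x,y$ with their homotopy classes' representatives, I expect the proof to be essentially a one-line unpacking in each direction.

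For the forward direction, I would start with $x$ and $y$ quasi-homotopic. By the definition recorded just before the lemma, there are curves $x' \homotop x$ and $y' \homotop y$ such that $x'$ and $y'$ are disjoint and quasi-homotopic. Since automorphisms of $\fine(S)$ preserve adjacency, $\varphi(x')$ and $\varphi(y')$ remain disjoint in $S$. By Lemma~\ref{lemma:disjointQH}, $\varphi(x')$ and $\varphi(y')$ are then disjoint and quasi-homotopic. By Proposition~\ref{prop:homotopic}, $\varphi(x') \homotop \varphi(x)$ and $\varphi(y') \homotop \varphi(y)$. Applying the definition once more, $\varphi(x)$ and $\varphi(y)$ are quasi-homotopic.

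For the converse, I would simply apply the same argument to $\varphi^{-1} \in \Aut\fine(S)$ using the pair $\varphi(x), \varphi(y)$. Both Lemma~\ref{lemma:disjointQH} and Proposition~\ref{prop:homotopic} are biconditionals invariant under $\varphi^{-1}$, so no new work is required.

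There is no real obstacle: the lemma is essentially a bookkeeping statement that says the ``existence of nice representatives'' definition of quasi-homotopy is automorphism-invariant as soon as both ingredients (disjoint-quasi-homotopy and homotopy) are invariant. The hypothesis $n \geq 6$ propagates from the two inputs and requires no separate verification.
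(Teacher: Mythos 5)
Your proof is correct and matches the paper's intent exactly: the paper gives no separate argument, simply noting that the lemma follows from the definition of quasi-homotopic pairs together with Proposition~\ref{prop:homotopic} (with Lemma~\ref{lemma:disjointQH} used implicitly, as you make explicit). Your write-up is just the careful unpacking of that one-line justification.
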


\pit{Pants pairs} Curves $a$ and $b$ form a \emph{pants pair} if they are not quasi-homotopic, they are non-crossing, and $a\cap b$ is an interval that is not a single point. The first two conditions in the definition (not quasi-homotopic and non-crossing) are both preserved by automorphisms of $\fine(S)$ by Lemma~\ref{lemma:QH} and Proposition~\ref{prop:sides}. 

The \emph{hull} of a nonempty collection of curves $\Gamma$ is the union of curves in $\Gamma$ and any disks or punctured disks they bound. We will refer to curves contained in the hull of $\Gamma$ as being in the hull of $\Gamma.$ 

We note that the definitions of pants pairs and hulls differ from those of Long--Margalit--Pham--Verberne--Yao to account for the punctures in $S.$

We will use the following lemma of Long--Margalit--Pham--Verberne--Yao characterizing hulls, which applies in our case \cite[Lemma 2.4]{LMPVY}.

\begin{lemma}[Lemma 2.4 of \cite{LMPVY}]\label{lemma:hull}
    Let $S=S^n_{0}$ with $n\geq 5.$ Let $\Gamma$ be a finite nonempty collection of curves in $S$ and let $\varphi\in\Aut \fine(S).$  If $x$ is in the hull of $\Gamma,$ then $\varphi(x)$ is in the hull of $\varphi(\Gamma).$
\end{lemma}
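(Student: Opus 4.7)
The plan is to characterize ``$x$ lies in the hull of $\Gamma$'' by a combinatorial condition involving only adjacency in $\fine(S)$ and the relation ``on one side of $\Gamma$''; both of these are already preserved by automorphisms, the former by the very definition of $\Aut \fine(S)$ and the latter by Proposition~\ref{prop:sides}. Specifically, I aim to show that $x$ lies in the hull of $\Gamma$ if and only if every essential curve $y \in \link(\Gamma)$ that lies on one side of $\Gamma$ satisfies $y \adj x$.

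For the forward direction, if $x$ is contained in the hull of $\Gamma$, then by definition $x$ does not meet any non-trivial complementary component of $S \setminus \Gamma$ (where non-trivial means neither a disk nor a once-punctured disk, i.e., not filled in by the hull). Any $y \in \link(\Gamma)$ that is on one side of $\Gamma$ sits inside some non-trivial component $R$, so $y$ and $x$ are disjoint and thus $y \adj x$. For the converse, I would observe that if $x$ is not contained in the hull, then some arc of $x \setminus \Gamma$ lies inside a non-trivial complementary component $R$; because $R$ is neither a disk nor a once-punctured disk, one can build an essential simple closed curve $y \subset R$ which is disjoint from $\Gamma$, on one side of $\Gamma$, and crosses this arc of $x$, violating the right-hand side of the characterization.

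Once the characterization is in hand, the lemma is immediate: $\varphi \in \Aut \fine(S)$ preserves adjacency by definition and preserves the property of being on one side of a collection by Proposition~\ref{prop:sides}. Therefore the combinatorial condition that characterizes membership in the hull transfers from $(x, \Gamma)$ to $(\varphi(x), \varphi(\Gamma))$, yielding that $\varphi(x)$ lies in the hull of $\varphi(\Gamma)$.

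The hardest step is the constructive converse: given a non-trivial planar region $R$ and an arc $\alpha$ properly embedded in $R$, produce an essential simple closed curve $y$ in $S$ that is contained in $R$ and crosses $\alpha$. The non-triviality of $R$ (at least two punctures of $S$ inside $R$, or multiple boundary curves, so that $R$ is neither a disk nor a once-punctured disk) provides enough topology to construct such a $y$, and the hypothesis $n \geq 5$ is what guarantees this construction goes through across all possible configurations of $\Gamma$.
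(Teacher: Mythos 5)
Your proof is correct. Note that the paper itself gives no argument for this statement: it is imported verbatim as Lemma~2.4 of \cite{LMPVY}, and your characterization --- $x$ lies in the hull of $\Gamma$ exactly when every essential curve in $\link(\Gamma)$ (equivalently, every such curve on one side of $\Gamma$, since an essential curve disjoint from $\Gamma$ cannot sit in a disk or once-punctured disk component) is disjoint from $x$ --- is essentially the link-containment argument used there, adapted to the punctured setting. The only detail worth spelling out in your converse step is why the curve $y\subset R$ you build is essential in $S$ and not merely in $R$: if $R$ has at least two punctures, take $y$ around two of them (the other side contains a curve of $\Gamma$, hence is not a disk or once-punctured disk); if $R$ has at least two frontier components, take $y$ separating two of them, and observe that each side of $y$ then contains an entire essential curve of $\Gamma$, since any $\gamma\in\Gamma$ meeting a frontier component is connected and disjoint from $y$. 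After that, isotoping $y$ within $R$ to cross the arc of $x$ finishes the argument as you describe.
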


We are now ready to show that pants pairs are preserved by automorphisms of $\fine(S).$

\begin{proposition}\label{prop:pantspairs}
    Let $S=S_0^n$ be a surface with $n\geq 6$ and $\varphi\in \Aut\fine(S).$ Suppose $a,c\in V(\fine(S))$ form a pants pair. Then $\{\varphi(a),\varphi(c)\}$ is also a pants pair. 
\end{proposition}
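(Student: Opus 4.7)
The plan is to verify the three defining properties of a pants pair---not quasi-homotopic, non-crossing, and intersection equal to an interval that is not a single point---for the pair $\{\varphi(a),\varphi(c)\}$. The first is preserved by Lemma~\ref{lemma:QH}. For the second, $\varphi$ preserves the property of two curves intersecting (non-adjacency is an invariant of the graph), and the second part of Proposition~\ref{prop:sides} implies $\varphi(a)$ is on one side of $\varphi(c)$ and vice versa. What remains is the nontrivial step: ruling out that $\varphi(a)\cap\varphi(c)$ is a single tangent point.

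Since $\varphi(a)$ and $\varphi(c)$ are non-crossing and intersecting, their intersection is either a single point or a single arc. To distinguish these combinatorially, I would establish the following criterion: given $a,c$ essential, non-crossing, intersecting, and not quasi-homotopic, $\{a,c\}$ is a pants pair if and only if the hull of $\{a,c\}$ contains an essential simple closed curve distinct from $a$ and $c$.

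For the forward direction, if $a\cap c$ is an arc $I$, write $a=I\cup a'$ and $c=I\cup c'$ and set $d=a'\cup c'$. Then $d$ is a simple closed curve contained in $a\cup c$ (and hence in the hull of $\{a,c\}$), distinct from both. The non-quasi-homotopy hypothesis forces the region $R_3$ bounded by $d$ as the ``third face'' of the theta graph $a\cup c$ to contain at least two punctures---otherwise disjoint pushoffs of $a$ and $c$ into the other two faces would cobound an annulus or once-punctured pair of pants---so $d$ is essential. For the converse, in the tangent case $a\cup c$ is a figure-eight graph, whose only embedded simple closed curves are the two loops $a$ and $c$ themselves. The essentiality of $a,c$ and the non-quasi-homotopy hypothesis force all three complementary regions of $a\cup c$ to have at least two punctures, so no region is a disk or once-punctured disk; thus the hull equals $a\cup c$ exactly and contains no third essential simple closed curve.

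With the characterization in place, Lemma~\ref{lemma:hull} concludes the argument: $\varphi(d)$ is an essential curve contained in the hull of $\{\varphi(a),\varphi(c)\}$, distinct from both. Applying the characterization to the pair $\{\varphi(a),\varphi(c)\}$ (which satisfies the first three hypotheses by the previous steps) forces $\varphi(a)\cap\varphi(c)$ to be an arc, so $\{\varphi(a),\varphi(c)\}$ is a pants pair. The main obstacle is the topological input underlying the criterion: verifying that a figure-eight graph contains only two embedded simple closed curves, and that the puncture counts dictated by essentiality and non-quasi-homotopy ensure the hull coincides with $a\cup c$ in both the arc and tangent configurations.
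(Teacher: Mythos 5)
There is a genuine gap. Your argument hinges on the claim that, since $\varphi(a)$ and $\varphi(c)$ are non-crossing and intersecting, their intersection is either a single point or a single arc. This is false under the paper's definition of non-crossing: two curves can coincide along several disjoint arcs (or touch at several tangent points) while each still lies on one side of the other --- for instance, a curve $c$ and a curve $a$ obtained by bulging $c$ outward, to the same side, along two separate subarcs. Such a pair is essential, intersecting, non-crossing, and can be arranged to be non-quasi-homotopic, yet it is not a pants pair. Your criterion cannot exclude this configuration: when $a\cap c$ has at least two components, one can close up a component of $a\setminus c$ by following $c$ in either direction, producing essential simple closed curves in $a\cup c$ distinct from $a$ and $c$. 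So ``there exists a third essential curve in the hull'' holds in the multi-component case as well, and your characterization fails in exactly the direction you need (ruling out bad images).

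Otherwise your approach is essentially the paper's: it also characterizes pants pairs by curves in the hull and invokes Lemma~\ref{lemma:hull}. The fix is the one the paper uses --- replace existence by \emph{uniqueness}. A non-disjoint, noncrossing pair $\{a,c\}$ is a pants pair if and only if there is a \emph{unique} curve $d\neq a,c$ in their hull: in the tangency case there are none (your figure-eight analysis is correct here), in the multi-component case there are at least two (or infinitely many, if $a$ and $c$ bound a disk or punctured disk), and in the pants-pair case the theta graph $a\cup c$ contains exactly one other simple closed curve and, by your own puncture count, the hull adds nothing. Note that preserving a uniqueness statement requires applying Lemma~\ref{lemma:hull} to both $\varphi$ and $\varphi^{-1}$, which is fine since both are automorphisms.
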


\begin{proof}
    We will show that two non-disjoint, noncrossing curves $a$ and $c$ form a pants pair if and only if there exists a unique curve $d\neq a,c$ in their hull.
    %    We will show that two non-disjoint, noncrossing curves $a$ and $c$ form a pants pair if and only if: (1) there exists a unique curve $d\neq a,c$ in their hull, and (2) there exists $a'$ homotopic to $a$ such that $a'$ and $c$ are disjoint. 

    If $\{a,c\}$ is a pants pair, then $a\cup c\setminus(\interior(a\cap c))$ (the closure of their symmetric difference) is a curve in the hull of $a$ and $c.$ 

    Conversely, suppose $\{a,c\}$ is not a pants pair. Then, either $|a\cap c|=1$ or the number of connected components of $a\cap c$ is at least 2. If $|a\cap c|=1,$ there are no curves other than $a$ and $c$ in the hull of $a$ and $c.$ Otherwise, if there are at least 2 connected components of $a\cap c,$ we have 2 options: either $a$ and $c$ bound a disk or punctured disk, in which case there are an infinite number of curves in their hull (this can be accomplished by homotoping $a$ into the disk or punctured disk, for example), or $a$ and $c$ do not bound a disk nor a punctured disk. In the latter case, consider a connected component $a'\subset a\setminus c.$ We can close $a'$ into an essential curve by following $c$ in either direction, creating 2 curves other than $a$ and $c$ in the hull of $a$ and $c.$
\end{proof}

\subsection{Bigon pairs}

Our goal in this section is to prove Proposition~\ref{prop:bigonpairs}, which states that bigon pairs are preserved by automorphisms of $\fine(S).$ 

\begin{figure}[h]
\begin{center}
\begin{tikzpicture}
    \node[anchor = south west, inner sep = 0] at (0,0) {\includegraphics[width=0.75\textwidth]{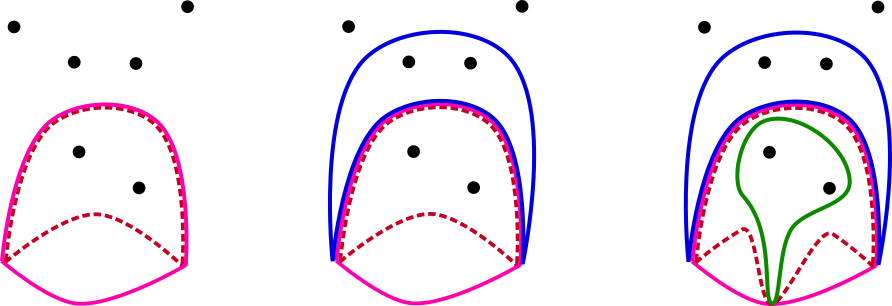}};
    %\draw[help lines] (0,0) grid (12,4);

    %left
    \node at (1.18,0.96){{\color{RRR}$a$}};
    \node at (1.1,0.22){{\color{PPP}$b$}};

    %center
    \node at (5.4,0.96){{\color{RRR}$a$}};
    \node at (5.35,0.22){{\color{PPP}$b$}};
    \node at (5.7,3.7){{\color{BBB}$c$}};

   %right
    \node at (10.35,2.03){{\color{DG}$d$}};
   
    \end{tikzpicture}
\caption{Left: an example of a bigon pair. Center: a curve $c$ that forms a pants pair with both $a$ and $b,$ as required by the proof of Proposition~\ref{prop:bigonpairs}. Right: a curve $d$ as in the proof of Proposition~\ref{prop:bigonpairs}}.\label{fig:bigonpairs}
\end{center}
\end{figure}

Curves $a$ and $b$ form a \emph{bigon pair} if they are homotopic and $a\cap b$ is a nontrivial interval. An example of a bigon pair can be found on the left of Figure~\ref{fig:bigonpairs}. We call the closure of $a\cup b \setminus(a\cap b)$ the inessential curve \emph{encoded by $a$ and $b$.} 

\begin{proposition}\label{prop:bigonpairs}
    Let $S=S_0^n$ be a boundaryless orientable surface with $n\geq 7.$ Let $a$ and $b$ be vertices of $\fine(S)$ and $\varphi\in\Aut\fine(S).$ Then $\{a,b\}$ is a bigon pair if and only if $\{\varphi(a),\varphi(b)\}$ is a bigon pair.
\end{proposition}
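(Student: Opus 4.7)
My plan is to combinatorially characterize bigon pairs in a way that is preserved under $\Aut\fine(S)$. Since homotopy of curves (Proposition~\ref{prop:homotopic}), non-disjointness (adjacency in $\fine(S)$), and pants-pair configurations (Proposition~\ref{prop:pantspairs}) are all preserved, my strategy is to express the bigon-pair condition purely in terms of these. Following Figure~\ref{fig:bigonpairs}, I expect a characterization of the form: $\{a, b\}$ is a bigon pair if and only if $a$ and $b$ are homotopic and non-disjoint, and there exist auxiliary curves $c$ and $d$ with prescribed pants-pair and adjacency relations to $a, b$.

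Concretely, the curve $c$ should form a pants pair with each of $a$ and $b$ (center of Figure~\ref{fig:bigonpairs}), so that $c$ meets each in a single arc and encodes essential symmetric-difference curves via the proof of Proposition~\ref{prop:pantspairs}. The curve $d$ (right of the figure) should then lie on the side of $c$ opposite the bigon region, with adjacency conditions witnessing that the region cobounded by $a$ and $b$ is a single disk rather than a more complex region. The forward direction is explicit: given the standard bigon configuration, one constructs $c$ cutting across the bigon and $d$ on the exterior.

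The main obstacle is the backward direction: given $a, b$ homotopic and non-disjoint but not forming a bigon pair, I must rule out any choice of $c, d$ satisfying the characterization. The non-bigon configurations divide into cases based on the type of intersection $a \cap b$: multiple shared arcs, isolated transverse intersection points, or mixtures of these. In each case, the intersection partitions a neighborhood of $a \cup b$ into multiple disk regions, and I must argue that no single $c$ can form pants pairs with both $a$ and $b$ while admitting an exterior witness $d$. A naive first attempt, that the hull of $\{a,b\}$ contains no essential curves beyond $a$ and $b$, fails because the disk cobounded by $a$ and $b$ produces parallel copies of $a$ also living in the hull; this is why the extra witness $d$ seems necessary. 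The case analysis, which likely uses the assumption $n \geq 7$ to guarantee the existence of enough auxiliary curves with the right pants-pair properties (in particular, to avoid quasi-homotopy constraints), will be the bulk of the proof; Lemma~\ref{lemma:hull} may still serve as a tool for restricting where $d$ can lie.
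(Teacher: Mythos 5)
Your setup matches the paper's: reduce to the case that $a$ and $b$ are homotopic and noncrossing (both preserved, by Propositions~\ref{prop:homotopic} and~\ref{prop:sides}), and then detect the bigon using an auxiliary curve $c$ that forms a pants pair with both $a$ and $b$, together with a condition involving curves $d$ on the opposite side. The gap is that you never pin down the actual condition on $d$, and the form you gesture at --- ``there exist auxiliary curves $c$ \emph{and} $d$ with prescribed pants-pair and adjacency relations'' --- has the quantifier on $d$ pointing the wrong way. A bigon pair is the \emph{minimal} configuration of homotopic noncrossing intersecting curves: any finite collection of auxiliary curves realizable near a genuine bigon can also be realized near a pair whose intersection has several arc components (just build the same local picture around one of the shared arcs and ignore the others). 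So an existential certificate for ``$\{a,b\}$ is a bigon pair'' is essentially doomed; what distinguishes the bigon is a \emph{non-existence} statement.

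The condition that works (and is what the paper uses) is: there exists $c$ forming a pants pair with both $a$ and $b$ such that \emph{every} essential curve $d$ meeting both $a$ and $b$ from the side of $a$ and $b$ opposite to $c$ must meet $c$. In the forward direction, $c$ is drawn through the unique arc $a\cap b$, and any noncrossing $d$ reaching both $a$ and $b$ from the far side is forced through $a\cap b$, hence through $c$. In the backward direction, if $a\cap b$ is a single point no pants pair with both exists, and if $a\cap b$ has several components then for any candidate $c$ one exhibits a $d$ threading a component of $a\cap b$ missed by $c$, violating the universal condition --- this is exactly the curve $d$ in the right of Figure~\ref{fig:bigonpairs}, and it is a witness \emph{against} being a bigon pair, not for it. Your instinct that the hull alone fails (because of parallel pushoffs into the cobounded disk) is correct, and your toolkit of preserved properties is the right one; but without the universal quantifier over $d$ the characterization does not separate the single-arc case from the multi-arc case, and the ``bulk of the proof'' you defer to case analysis cannot be completed as stated.
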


\begin{proof}
    We may assume that $a$ and $b$ are noncrossing and homotopic since these properties are preserved by automorphisms by Propositions~\ref{prop:sides} and \ref{prop:homotopic}.
    
    We will show that $\{a,b\}$ is a bigon pair if and only if there exists an essential curve $c$ such that $\{a,c\}$ and $\{b,c\}$ are both pants pairs and if whenever an essential curve $d$ intersects both $a$ and $b$ but is on the opposite side of $a$ and $b$ as $c$, then $d\cap c \neq \emptyset.$

    Suppose that $a$ and $b$ form a bigon pair. Since at least one side of $a$ and $b$ has at least 4 punctures, we may draw a curve $c$ that intersects $a$ and $b$ at precisely at $a\cap b$ and forms a pants pair with both $a$ and $b.$ (See the center of Figure~\ref{fig:bigonpairs} for an example.) Without loss of generality, $c$ is on the same side of $a$ as $b.$ Then for every curve $d$ on the opposite side of $a,$ in order to intersect both $a$ and $b$ and not cross $a,$ $d$ must intersect $a\cap b,$ and therefore also intersect $c.$

    Suppose now that $a$ and $b$ do not form a bigon pair. Since $a$ and $b$ are noncrossing and homotopic, $|a\cap b|=1$ or there are multiple connected components of $a\cap b.$ In the former case, there is no curve that forms a pants pair with both $a$ and $b$ since such a curve must intersect both $a$ and $b$ in a nontrivial interval while also not crossing them. In the latter case, let $c$ be a pants pair with both $a$ and $b$ (if one exists; if one does not, we are done) and let $d\in \link(a,b)$ be on the opposite side of $a$ and $b$ as $c.$ Then we can isotope $d$ to intersect a connected component of $a\cap b$ that does not intersect $c,$ as desired. (See the right of Figure~\ref{fig:bigonpairs} for an example.)
\end{proof}   

The key intuitive idea is that a bigon pair encodes an inessential curve, and the above proposition states that a bigon pair is sent to a bigon pair. We will use this fact to extend an automorphism of $\fine(S)$ to an automorphism of $\mathcal{E}\fine(S)$ by defining how the automorphism acts on inessential curves. It remains to show that two bigon pairs encoding the same inessential curve are sent to bigon pairs encoding the same inessential curve; that is the point of the next section and of sharing pairs.

\subsection{Sharing pairs} In this section, our main goal is to prove that sharing pairs are preserved by automorphisms; this is Proposition~\ref{prop:sharingpairs}.

A \emph{sharing pair} is a pair of bigon pairs that encode the same inessential curve. An example of a sharing pair is in the left of Figure~\ref{fig:nonQHstandardsharingpairsexp}. To ensure that our encodings of inessential curves and their images are well-defined, we must ascertain that sharing pairs are taken to sharing pairs by automorphisms of $\fine(S).$

\begin{proposition}\label{prop:sharingpairs}
    Let $S=S_0^n$ be a boundaryless orientable surface with $n\geq 7.$ Suppose $\{a_1,a_2\}$ and $\{b_1,b_2\}$ are bigon pairs that form a sharing pair. Let $\varphi \in \Aut \fine(S).$ Then $\{\varphi(a_1),\varphi(a_2)\}$ and $\{\varphi(b_1),\varphi(b_2)\}$ also form a sharing pair.
\end{proposition}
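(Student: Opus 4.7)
The plan parallels the preceding propositions in this section: find a combinatorial characterization of ``sharing pair'' using only features of $\fine(S)$ already known to be preserved by automorphisms, then apply it to the images $\{\varphi(a_1),\varphi(a_2)\}$ and $\{\varphi(b_1),\varphi(b_2)\}$. By Proposition~\ref{prop:bigonpairs} and Proposition~\ref{prop:homotopic}, these images are already bigon pairs consisting of pairwise homotopic curves, so the only remaining content is that the two image bigon pairs encode the \emph{same} inessential curve.

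Geometrically, the inessential curve $\gamma$ encoded by $\{a_1,a_2\}$ bounds a specific disk $D_\gamma$ in $S$ containing at most one puncture, and two bigon pairs form a sharing pair precisely when they bound the same disk. Hence the combinatorial task is to detect ``same bigon disk'' purely in terms of the four curves and auxiliary vertices of $\fine(S)$. A natural approach is to exhibit a witness curve $c$ that interacts with all four of $a_1,a_2,b_1,b_2$ in a way that can only occur when the disks agree -- for example $c$ pants-paired (in the sense of Proposition~\ref{prop:pantspairs}) with each of the four curves through arcs lying on the shared boundary $\gamma$, together with a complementary nonexistence clause ruling out curves that separate $c$ from the four curves across the shared bigon. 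Each ingredient in such a characterization -- being a bigon pair, being a pants pair, being noncrossing, lying in a hull, being on a specific side, having a specified homotopy class -- is automorphism-invariant by the earlier results of this section, so the characterization as a whole would be preserved by $\varphi$.

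The main obstacle will be making this characterization robust enough to cover every configuration. Two geometric regimes require separate treatment: when $\gamma$ is null-homotopic, so that the bigon is an unpunctured disk and no essential curves are isotopic to $\gamma$ in $S$ to serve as witnesses; and when $\gamma$ is peripheral to a puncture, so that essential curves isotopic to peripheral loops around that puncture do exist and provide additional rigidity. A secondary obstacle is that the shared arcs $a_1\cap a_2$ and $b_1\cap b_2$ both lie outside $D_\gamma$ and can occupy varied relative positions there, so any witness construction has to adapt to those positions. This is likely where the auxiliary arc graph promised in the introduction enters the argument: it should encode the combinatorics of arcs lying on $\gamma$, allowing two bigon pairs that encode the same $\gamma$ to be identified via automorphisms of this arc graph and thus turned into equivalent combinatorial data.
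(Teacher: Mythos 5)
There is a genuine gap here: you have the right target (detect ``encodes the same inessential curve'' in automorphism-invariant terms) but not the mechanism the proof actually needs. A single witness-curve characterization of the kind you sketch is only available for a restricted class of sharing pairs. The paper first defines \emph{standard} sharing pairs (those whose attaching arcs --- the closures of $a_1\setminus e$ and $b_1\setminus e$ --- are disjoint), characterizes the non-quasi-homotopic standard ones by an explicit list of conditions (two pants pairs, one disjointness, one crossing, plus a nonexistence clause), and then bootstraps to all standard sharing pairs by inserting one or two auxiliary bigon pairs. Your proposed characterization, with a curve $c$ pants-paired to all four curves plus a nonexistence clause, is essentially this restricted characterization; it has no chance of covering a general sharing pair in which the two attaching arcs intersect or partially coincide, and you give no concrete alternative for that case.

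The missing idea is the chaining argument: for an arbitrary sharing pair $\{A,B\}$ encoding $e$, one produces a sequence $A=X^0,X^1,\dots,X^p=B$ of bigon pairs encoding $e$ such that consecutive ones form \emph{standard} sharing pairs, and then applies the standard case repeatedly. This reduces to finding a path of pairwise-disjoint attaching arcs in the punctured disk $D_n$ obtained by deleting the disk bounded by $e$, which is exactly what the connectivity of the arc graph $\mathcal{A}_{\partial,\geq2\times}(D_n)$ (proved via Putman's trick) supplies, together with the Long--Margalit--Pham--Verberne--Yao argument that isotopic arcs can be joined through pairwise homotopic ones. Your description of the arc graph's role --- identifying bigon pairs ``via automorphisms of this arc graph'' --- is not how it is used; only its connectedness matters. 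As a minor point, your worry about $\gamma$ being peripheral to a puncture does not arise: since the two curves of a bigon pair are homotopic, the bigon they cut off is an unpunctured disk.
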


To prove this, we will go through several steps. A \emph{standard sharing pair} $\{a,b\}$ and $\{c,d\}$ encoding inessential curve $e$ is a sharing pair such that the closures of the arcs $a\setminus e$ and $c\setminus e$ are disjoint, including at their endpoints. 

We will first show that non-quasi-homotopic standard sharing pairs are preserved by automorphisms of $\fine(S),$ then that standard sharing pairs are preserved, and finally that all sharing pairs are preserved.

\begin{lemma}\label{lemma:nonQHstandardsharingpairs}
    Let $S=S_0^n$ be a boundaryless orientable surface with $n\geq 7.$ Suppose $\{a_1,a_2\}$ and $\{b_1,b_2\}$ are non-quasi-homotopic bigon pairs that form a standard sharing pair. Let $\varphi \in \Aut \fine(S).$ Then $\{\varphi(a_1),\varphi(a_2)\}$ and $\{\varphi(b_1),\varphi(b_2)\}$ also form a non-quasi-homotopic standard sharing pair.
\end{lemma}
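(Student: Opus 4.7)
The plan is to characterize ``non-quasi-homotopic standard sharing pair'' combinatorially using properties already shown to be preserved by automorphisms of $\fine(S)$, namely sides (Proposition~\ref{prop:sides}), quasi-homotopy (Lemma~\ref{lemma:QH}), pants pairs (Proposition~\ref{prop:pantspairs}), hulls (Lemma~\ref{lemma:hull}), and bigon pairs (Proposition~\ref{prop:bigonpairs}). Given such a characterization, the lemma follows at once.

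First I would observe that the conditions ``both $\{a_1,a_2\}$ and $\{b_1,b_2\}$ are bigon pairs'' and ``the two pairs are mutually non-quasi-homotopic'' are already preserved, so it remains only to characterize the standard sharing pair condition itself. Geometrically, that condition encodes two things: that $\{a_1,a_2\}$ and $\{b_1,b_2\}$ share the same encoded inessential curve $e$, and that their ``bulge arcs'' $\alpha=a_1\cap a_2$ and $\alpha'=b_1\cap b_2$ are disjoint (even at endpoints) and lie on the same (non-disk) side of $e$.

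Following the pattern of the proof of Proposition~\ref{prop:bigonpairs}, I would aim to find a small collection of witness curves with explicit pants-pair and disjointness relations. A natural candidate condition is: there exists an essential curve $c$ forming a pants pair with each of $a_1$ and $a_2$ but disjoint from each of $b_1$ and $b_2$, and symmetrically a curve $c'$ with the roles of $a$ and $b$ swapped, subject to an additional side or hull condition that pins the two witnesses to the common $e$. The forward direction should be straightforward to implement: since $\alpha$ and $\alpha'$ are disjoint in a standard sharing pair, one can thicken $\alpha$ into a curve $c$ sharing an arc with both $a_1$ and $a_2$ while avoiding the disjoint region containing $\alpha'$ and the complementary arc of $e$, and likewise for $c'$.

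The main obstacle is the backward direction: showing that the existence of the witnesses forces the two bigon pairs to encode the \emph{same} inessential curve in the standard way. Other non-QH bigon pairs encoding distinct inessential curves could admit pants-pair witnesses of a superficially similar flavor, so the challenge is to isolate the precise combinatorial conditions that rule these out. I expect the key tools will be the hull lemma (Lemma~\ref{lemma:hull}), to certify that the inessential curve encoded by $\{a_1,a_2\}$ must also be encoded by $\{b_1,b_2\}$, and side-preservation (Proposition~\ref{prop:sides}), to force $\alpha$ and $\alpha'$ onto the same side and to guarantee their disjointness. A secondary obstacle, mirroring the need for $n\geq 7$ in Proposition~\ref{prop:bigonpairs}, will be ensuring enough punctures are available in each complementary region to realize all required witnesses, which is likely where the hypothesis $n\ge 7$ is used.
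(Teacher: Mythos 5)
There is a genuine gap here: your proposal is a strategy sketch whose decisive step---the backward direction---is explicitly left unresolved. You correctly identify that the hard part is showing that your proposed witnesses force the two bigon pairs to encode the \emph{same} inessential curve with disjoint attaching arcs, but you do not supply the combinatorial condition that accomplishes this, and the witness scheme you propose (an external curve $c$ forming pants pairs with $a_1$ and $a_2$ while disjoint from $b_1$ and $b_2$, plus a symmetric $c'$) does not obviously do so: two non-quasi-homotopic bigon pairs encoding \emph{different} inessential curves in roughly the same region of $S$ can admit exactly such witnesses, and neither the hull lemma nor side-preservation, as you invoke them, pins down equality of the encoded curves. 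A proof must name the preserved conditions precisely and verify both implications; ``I expect the key tools will be\dots'' is where the actual content of the lemma lives.

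For comparison, the paper's characterization does not use external witness curves at all for the positive conditions: it uses relations among the four curves themselves, namely that (after reordering) $\{a_1,b_1\}$ and $\{a_2,b_2\}$ are pants pairs, $a_1$ and $b_2$ are disjoint, and $a_2$ and $b_1$ cross. The condition that certifies the two bigon pairs encode the same inessential curve is a \emph{negative} existence statement: there is no essential curve intersecting exactly one of $a_1,a_2,b_1,b_2$. The geometric point is that in a genuine standard sharing pair every point of $a_1\cup a_2\cup b_1\cup b_2$ lies on at least two of the four curves, whereas if the encoded inessential curves differ, some subarc of one curve is ``exposed'' and a small curve can be isotoped to cross only that one. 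If you want to salvage your approach, this non-existence condition (or an equivalent) is the missing ingredient; the external pants-pair witnesses alone will not close the backward direction.
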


\begin{proof}
    We will show that $\{a_1,a_2\}$ and $\{b_1,b_2\}$ are non-quasi-homotopic bigon pairs that form a standard sharing pair if and only if the following conditions hold, up to reordering of the $a_i$s and $b_i$s: 
    \begin{enumerate}
        \item $\{a_1,b_1\}$ is a pants pair,
        \item $\{a_2,b_2\}$ is a pants pair, 
        \item $a_1$ and $b_2$ are disjoint, 
        \item $a_2$ and $b_1$ are crossing, and 
        \item there does not exist an essential curve $c$ such that $c$ intersects exactly one of $\{a_1,a_2,b_1,b_2\}.$
    \end{enumerate}

    Suppose that $\{a_1,a_2\}$ and $\{b_1,b_2\}$ are non-quasi-homotopic bigon pairs that form a standard sharing pair. Then, up to homeomorphism (and adjusting for the number of punctures in the surface), they are arranged as in the left (or, equivalently, middle) of Figure~\ref{fig:nonQHstandardsharingpairsexp}. As seen in Figure~\ref{fig:nonQHstandardsharingpairsexp}, requirements (1)-(4) are satisfied by observation. (5) is satisfied since there is no point in $a_1\cup a_2\cup b_1\cup b_2$ that belongs to precisely one of the curves.

    \begin{figure}[h]
\begin{center}
\begin{tikzpicture}
    \node[anchor = south west, inner sep = 0] at (0,0) {\includegraphics[width=0.7\textwidth]{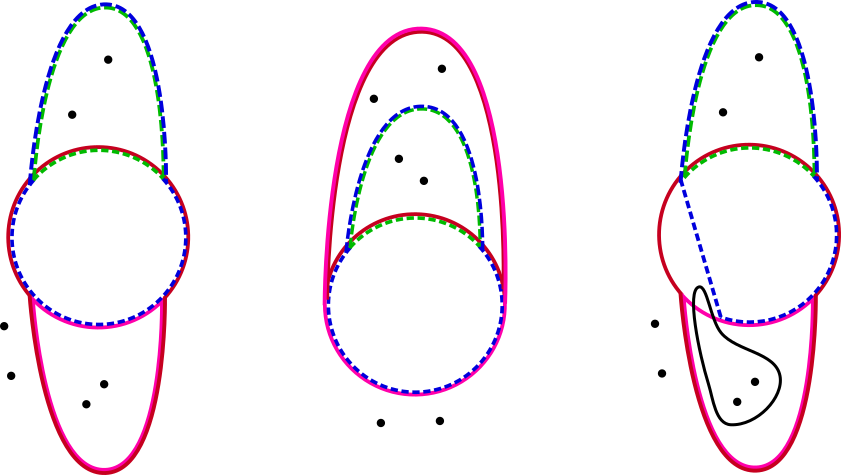}};
    %\draw[help lines] (0,0) grid (12,7);

    %left
    \node at (1.6,5.56){{\color{GGG}$b_2$}};
    \node at (2,6){{\color{BBB}$b_1$}};
    \node at (1.75,1.15){{\color{PPP}$a_1$}};
    \node at (2.15,0.5){{\color{RRR}$a_2$}};

    %middle
    \node at (5.43,4.3){{\color{GGG}$b_2$}};
    \node at (5.9,4.75){{\color{BBB}$b_1$}};
    \node at (4.5,5.37){{\color{PPP}$a_1$}};
    \node at (4.95,5.15){{\color{RRR}$a_2$}};

    %right
    \node at (9.8,5.56){{\color{GGG}$b_2$}};
    \node at (10.2,6){{\color{BBB}$b_1$}};
    \node at (9.6,0.35){{\color{PPP}$a_1$}};
    \node at (10.4,0.5){{\color{RRR}$a_2$}};
    \node at (9.15,1.5){$c$};
    
    \end{tikzpicture}
\caption{Left and center: Examples of two equivalent ways to picture sharing pairs. Right: Construction of a curve $c$ that intersects exactly one of $a_1,\ a_2,\ b_1,$ and $b_2$ if the attaching arc of $a$ does not attach in $e_a\cap e_b$.}\label{fig:nonQHstandardsharingpairsexp}
\end{center}
\end{figure}

    Conversely, suppose that $\{a_1,a_2\}$ and $\{b_1,b_2\}$ are not quasi-homotopic bigon pairs that do not form a sharing pair. In other words, the inessential curves $e_a$ and $e_b$ defined by $\{a_1,a_2\}$ and $\{b_1,b_2\}$, respectively, are not the same. We can view $a_1\cup a_2$ as attaching an arc (comprised of $(a_1\cup a_2)\setminus e_a$) to $e_a$; we call this arc the \emph{attaching arc} of $a.$ Similarly, we have an attaching arc of $b.$

    %{\color{blue}
        Due to the disjointness condition (3), we have that the attaching arcs of $a$ and $b$ are disjoint. Due to the pants conditions (1) and (2), $a_1\cap b_1$ must consist of a single connected component and $a_2\cap b_2$ must consist of a single connected component. Moreover, $a_1\cap e_a \subset e_b$ to avoid having a curve parallel to $a_1$ intersect exclusively $a_1$, as in the right of Figure~\ref{fig:nonQHstandardsharingpairsexp}. Similarly, $b_2\cap e_b\subset a_2.$ We can then sort the punctures into three categories: on the side of $b_2$ not containing $a_1$, on the side of $a_1$ not containing $b_2,$ and all the others (the ``outsiders''). If $e_a\neq e_b$, it means $a_2$ does not completely coincide with $b_1$ outside the attaching arc of $a$ (and vice versa for the attaching arc of $b$). 

        We now have two options: either $e_a$ is in one connected component of $S\setminus e_b$ (without loss of generality, it is in the disk) or in both connected components of $S\setminus e_b.$ Either way, draw a curve $c$ disjoint from $a_2$ surrounding the outsider punctures and (if necessary) homotope it to intersect $b_1\cap e_b,$ breaking condition (5).
\end{proof}

Now that we know that non-quasi-homotopic standard sharing pairs are preserved by automorphisms, we want to prove the analogous result for all standard sharing pairs.

\begin{lemma}\label{lemma:standardsharingpairs}
    Let $S=S_0^n$ be a boundaryless orientable surface with $n\geq 7.$ Suppose $A=\{a_1,a_2\}$ and $B=\{b_1,b_2\}$ are bigon pairs that form a standard sharing pair. Let $\varphi \in \Aut \fine(S).$ Then $\{\varphi(a_1),\varphi(a_2)\}$ and $\{\varphi(b_1),\varphi(b_2)\}$ also form a standard sharing pair.
\end{lemma}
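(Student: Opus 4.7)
The plan is to reduce to Lemma~\ref{lemma:nonQHstandardsharingpairs} by introducing an auxiliary bigon pair. If $A$ and $B$ already form a non-quasi-homotopic standard sharing pair, then the conclusion is immediate from that lemma. Otherwise $a_1$ and $b_1$ are quasi-homotopic---either homotopic or cobounding a pair of pants---and the task is to reduce this case to the non-quasi-homotopic one.

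First I would construct a third bigon pair $C=\{c_1,c_2\}$ encoding the same inessential curve $e$ as $A$ and $B$, whose attaching arc lies in the complement of the attaching arcs of $A$ and $B$ on $e$, and such that $c_1$ is not quasi-homotopic to $a_1$ (equivalently, to $b_1$). The existence of such a $C$ uses $n\geq 7$: the disk or punctured disk bounded by $e$ contains at most one puncture, so the other component of $S\setminus e$ contains at least six punctures. This gives enough room to choose the attaching arc of $C$ on the ``outside'' of $e$, disjoint from the attaching arcs of $A$ and $B$, and arranged so that $c_1$ and $a_1$ cobound a subsurface with at least two punctures on each side (ensuring non-quasi-homotopy as in the proof of Lemma~\ref{lemma:disjointQH}). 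Applying Lemma~\ref{lemma:nonQHstandardsharingpairs} to the two non-quasi-homotopic standard sharing pairs $(A,C)$ and $(B,C)$, one obtains that $(\varphi(A),\varphi(C))$ and $(\varphi(B),\varphi(C))$ are both non-quasi-homotopic standard sharing pairs. In particular, $\varphi(A)$, $\varphi(B)$, and $\varphi(C)$ all encode the same inessential curve, so $\varphi(A)$ and $\varphi(B)$ form a sharing pair.

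The main obstacle I anticipate is upgrading ``sharing pair'' to ``standard sharing pair'' for the image, i.e.\ checking that the attaching arcs of $\varphi(A)$ and $\varphi(B)$ are disjoint and not merely that the two pairs encode the same inessential curve. I would address this by seeking a combinatorial characterization of standardness in terms of intersection patterns between the $a_i$ and $b_j$ (pants pair, disjoint, crossing) that is preserved under $\varphi$ via Propositions~\ref{prop:pantspairs} and~\ref{prop:sides} and Lemma~\ref{lemma:QH}; alternatively, I would introduce a second auxiliary bigon pair $C'$ placed on the opposite side of the attaching arcs of $A$ and $B$ on $e$, so that the joint standardness relations of $A$, $B$ with both $C$ and $C'$ force the attaching arcs of $\varphi(A)$ and $\varphi(B)$ to remain separated on the image inessential curve. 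The delicate case is when $e$ bounds a once-punctured disk and $a_1,b_1$ cobound a pair of pants, where the configuration is most constrained by the puncture count.
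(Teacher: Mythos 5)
Your overall strategy---interpolating between $A$ and $B$ through an auxiliary bigon pair encoding the same inessential curve $e$ and invoking Lemma~\ref{lemma:nonQHstandardsharingpairs} twice---is exactly the paper's strategy, and your observation that only the reduction of ``sharing pair'' to ``standard sharing pair'' for the images needs extra care is a fair one (it follows from the fact that standardness forces some $a_i$ to be disjoint from some $b_j$, and disjointness is preserved; the paper does not spell this out either). However, there is a genuine gap in your existence claim for the single auxiliary pair $C$. Having six or seven punctures outside the disk bounded by $e$ is not enough room. Write $R_A$ and $R_B$ for the regions cut off by the attaching arcs of $A$ and $B$ together with $e$, and $M$ for the region between them; quasi-homotopy of the two bigon pairs means $M$ contains at most one puncture. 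Any attaching arc for $C$ that is disjoint from both existing attaching arcs (as standardness with both $A$ and $B$ requires) must lie entirely in $R_A$, $M$, or $R_B$. When $n=7$ and the punctures are distributed $3$--$1$--$3$ among $R_A$, $M$, $R_B$, an arc in $M$ cannot cut off two punctures, and an arc in $R_A$ cutting off two (resp.\ all three) of its punctures yields a $c_1$ that cobounds a pair of pants with (resp.\ is homotopic to) $a_1$; symmetrically for $R_B$. So in this case \emph{every} admissible $C$ is quasi-homotopic to $A$ or to $B$, and your single-step reduction fails. (Relatedly, your parenthetical ``not quasi-homotopic to $a_1$, equivalently to $b_1$'' is false when $a_1$ and $b_1$ cobound a pair of pants rather than being homotopic.) The paper resolves this by splitting into two cases: if some $R_\bullet$ has at least four punctures, one auxiliary pair suffices; in the tight $3$--$1$--$3$ case it uses a chain of \emph{two} auxiliary bigon pairs $C,D$ with $\{A,C\}$, $\{C,D\}$, $\{D,B\}$ all non-quasi-homotopic standard sharing pairs. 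Your proposal needs this extra case. Your worry about $e$ bounding a once-punctured disk, on the other hand, is vacuous: since $a_1$ and $a_2$ are homotopic essential curves on a punctured sphere, the bigon they cobound can contain no puncture.
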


\begin{proof}
    The result is already proven if $a_i$ and $b_j$ are not quasi-homotopic in Lemma~\ref{lemma:nonQHstandardsharingpairs}, so we suppose they are. The region between $a_1$ and $b_2$, pictured on the outside (so to speak) of the curves in both sides of Figure~\ref{fig:standardsharingpairs}, contains 0 or 1 punctures, so either: (1) one of $a_1$ and $b_2$ bounds at least 4 punctures on the side opposite $b_2$ and $a_1,$ respectively, or (2) both $a_1$ and $b_2$ bound exactly 3 punctures on the side opposite $b_2$ and $a_1,$ respectively. In case (1) (pictured on the left of Figure~\ref{fig:standardsharingpairs}), we can find a bigon pair $C=\{c_1,c_2\}$ that forms a non-quasi-homotopic standard sharing pair with both $\{a_1,a_2\}$ and $\{b_1,b_2\}$ and is in the connected component of $S\setminus(a_1\cup a_2\cup b_1\cup b_2)$ containing at least 4 punctures. In case (2) (pictured on the right of Figure~\ref{fig:standardsharingpairs}), we can find two bigon pairs $C=\{c_1,c_2\}$ and $D=\{d_1,d_2\}$ such that the following form a non-quasi-homotopic standard sharing pairs: $\{A,C\}$, $\{C,D\},$ and $\{D,B\}.$ This can be done by having the attaching arc of the curves in $C$ be contained in the thrice punctured disk bounded by the $B$ curves and having the attaching arc of the curves in $D$ be contained in the thrice punctured disk bounded by the curves in $A.$
    \begin{figure}[h]
\begin{center}
\begin{tikzpicture}
    \node[anchor = south west, inner sep = 0] at (0,0) {\includegraphics[width=0.5\textwidth]{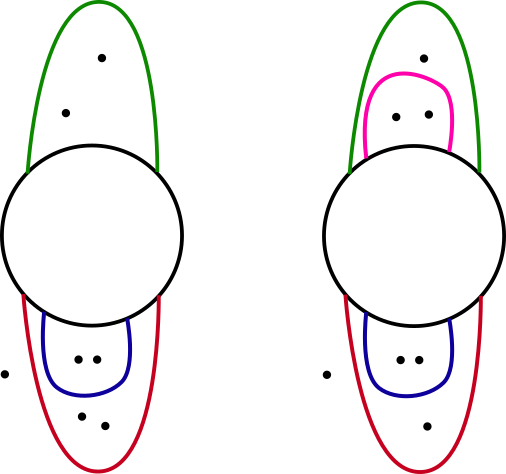}};
    %\draw[help lines] (0,0) grid (8,8);

    %left
    \node at (0.6,6.5){{\color{DG}$A$}};
    \node at (1.3,1.46){{\color{DB}$C$}};
    \node at (1.5,0.3){{\color{RRR}$B$}};

    %right
    \node at (5.45,6.5){{\color{DG}$A$}};
    \node at (6.1,1.46){{\color{DB}$C$}};
    \node at (6.45,0.3){{\color{RRR}$B$}};
    \node at (6.2,5.7){{\color{PPP}$D$}};

    \end{tikzpicture}
\caption{Left: case (1) in the proof of Lemma~\ref{lemma:standardsharingpairs}. Right: case (2) in the proof of Lemma~\ref{lemma:standardsharingpairs}.}\label{fig:standardsharingpairs}
\end{center}
\end{figure}
\end{proof}

With this in mind, we are almost ready to prove Proposition~\ref{prop:sharingpairs}. The proof idea will be the same as that of the standard sharing pairs, as we will want to show that between any two sharing pairs, there is a path of standard sharing pairs. However, this is not immediate, and we will pass through an arc graph to prove this result.

\pit{Connectivity of an arc graph variant} Let $D_n$ be a surface with 1 boundary and $n\geq7$ punctures (a punctured disk). Define $\weirdarc(D_n)$ to be a graph whose vertices are isotopy classes of properly embedded arcs whose endpoints are in $\partial D_n$ and that bound at least 2 punctures and whose edges connect arcs that admit disjoint representatives. We allow the endpoints of the arcs to move in the boundary as part of isotopies. We say an arc $\gamma$ \emph{surrounds $k$ punctures} if the complementary component of $\gamma$ with the fewest punctures contains $k$ punctures.

Define the \emph{mapping class group} of a surface $S$, denoted $\MCG(S)$, to be the group of isotopy classes of homeomorphisms of $S$ that fix the boundary pointwise (equivalently, $\pi_0(\Homeo(S,\partial S)).$) 

\begin{proposition}\label{prop:weirdarcgraphconnected}
    Let $D_n$ be an orientable surface with 1 boundary and $n\geq 7$ punctures. Then $\weirdarc(D_n)$ is connected.
\end{proposition}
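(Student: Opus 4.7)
The plan is to prove connectivity of $\weirdarc(D_n)$ by induction on the minimal geometric intersection number $m = i(\alpha,\beta)$ of two arcs $\alpha, \beta \in \weirdarc(D_n)$. The base case $m=0$ is immediate: minimal position yields disjoint representatives, so $\alpha$ and $\beta$ are adjacent in $\weirdarc(D_n)$.

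For the inductive step with $m\geq 1$, I would place $\alpha$ and $\beta$ in minimal position and select an outermost sub-arc $\beta' \subset \beta\setminus\alpha$, i.e., one whose interior is disjoint from $\alpha$ and whose endpoints are a point $b\in\partial D_n$ and a point $p\in\alpha$. The point $p$ splits $\alpha$ into two sub-arcs $\alpha_1, \alpha_2$, each having one endpoint on $\partial D_n$ and the other at $p$. I then form the two candidate surgered arcs $\gamma_1, \gamma_2$ as small push-offs of $\alpha_1 \cup \beta'$ and $\alpha_2 \cup \beta'$ in a consistent direction. By the standard surgery computation, each $\gamma_j$ is a properly embedded arc with endpoints on $\partial D_n$, and one can arrange $i(\gamma_j,\alpha) = 0$ (since $\gamma_j$ runs parallel to a sub-arc of $\alpha$ and then to $\beta'$, which is disjoint from $\alpha$ away from $p$) and $i(\gamma_j,\beta) \leq m-1$. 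Consequently, any $\gamma_j$ that actually lies in $\weirdarc(D_n)$ sits in the same connected component as both $\alpha$ and $\beta$ by the inductive hypothesis.

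The main obstacle is verifying that at least one of $\gamma_1, \gamma_2$ actually lies in $\weirdarc(D_n)$, that is, surrounds at least $2$ punctures on each side. Let $r_{ij}$ denote the number of punctures lying simultaneously on side $i$ of $\alpha$ and side $j$ of $\beta$; since $\alpha, \beta \in \weirdarc(D_n)$, every row sum and column sum of the matrix $(r_{ij})$ is $\geq 2$, and $\sum r_{ij} = n \geq 7$. The two sides of each $\gamma_j$ are, up to the thin push-off strip (which contains no punctures), certain unions of the regions $R_{ij}$ together with the adjacent sub-arcs of $\partial D_n$ running from $a_j$ to $b$. A finite case analysis using these inequalities together with the constraint $n\geq 7$ should show that at least one $\gamma_j$ has $\geq 2$ punctures on each side.

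The hypothesis $n\geq 7$ is used precisely here to rule out degenerate configurations. In the remaining edge cases one can reselect $\beta'$, take an outermost sub-arc of $\alpha\setminus\beta$ instead, or pass through an intermediate arc in $\weirdarc(D_n)$ to finish the induction. I expect this case analysis to be the main technical burden of the proof.
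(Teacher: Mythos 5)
Your strategy---induction on the geometric intersection number via arc surgery---is a legitimate and genuinely different route from the paper's, which instead applies Putman's trick to the action of $\MCG(D_n)\cong B_n$ on $\weirdarc(D_n)$ (fixing a base arc $v_0$ around two punctures, showing every orbit meets the component of $v_0$, and checking each half-twist generator moves $v_0$ distance at most $2$). However, your argument has a genuine gap at exactly the step you flag, and the bookkeeping you propose for closing it is incorrect: the sides of the surgered arcs are \emph{not} unions of your regions $R_{ij}$. Concretely, let $W$ be the component of $D_n\setminus\alpha$ containing $\beta'$ and $W'$ the other component; $\beta'$ cuts $W$ into a piece $W_1$ bounded by $\alpha_1$, $\beta'$, and part of $\partial D_n$, and a piece $W_2$ bounded by $\alpha_2$, $\beta'$, and part of $\partial D_n$. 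The two sides of $\gamma_1\simeq\alpha_1\cup\beta'$ are then $W_1$ and $W_2\cup W'$, and the punctures of $W_1$ cannot be read off from the matrix $(r_{ij})$: a puncture of $W_1$ may lie on either side of $\beta$, depending on how $\beta$ behaves after its first crossing with $\alpha$. So the ``finite case analysis using these inequalities'' is not set up on the right quantities.

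With the corrected bookkeeping, writing $w_1,w_2,w'$ for the puncture counts of $W_1,W_2,W'$, one has $w'\geq 2$ and $w_1+w_2\geq 2$ (since $\alpha\in\weirdarc(D_n)$), and $\gamma_j$ lies in $\weirdarc(D_n)$ precisely when $w_j\geq 2$; hence the surgery fails exactly when $w_1=w_2=1$. In that configuration neither surgered arc is admissible, and your proposed remedies (reselect $\beta'$, surger along $\alpha$ instead, insert an intermediate arc) are precisely where the remaining work lies---for instance, if both outermost segments of $\beta$ land in the side of $\alpha$ containing only these two punctures, a further argument is needed, and this is where $n\geq 7$ must actually be deployed. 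As it stands the proposal is a plausible plan rather than a proof; to complete it you must either carry out the corrected case analysis (including the $w_1=w_2=1$ escape route) or switch to the group-action argument, which sidesteps surgery entirely.
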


\begin{proof}
    To prove connectivity, we will use Putman's trick \cite{putman}. First, we note that $D_n$ is a punctured disk, so its mapping class group is the braid group $B_n.$ Let $Q=\{\sigma_1,\ldots,\sigma_{n-1}\}$ be the generating set for $B_n$ consisting of $n-1$ half-twists, as shown in the left of Figure~\ref{fig:weirdarcgraph}. $\MCG(D_n)$ acts on $\weirdarc(D^n)$ in the natural manner. For the remainder of the proof, we assume that all endpoints of arcs are in the boundary component of $D_n.$ Let $v_0$ be an arc surrounding the first 2 punctures, in the sense of the right of Figure~\ref{fig:weirdarcgraph}.

        \begin{figure}[h]
\begin{center}
\begin{tikzpicture}
    \node[anchor = south west, inner sep = 0] at (0,0) {\includegraphics[width=\textwidth]{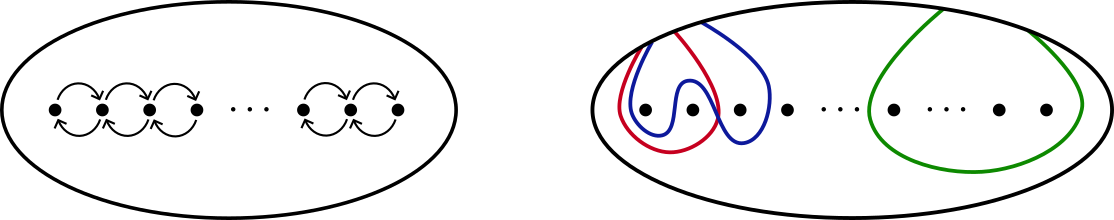}};
    %\draw[help lines] (0,0) grid (16,4);
    %left
    \node at (1.12,1.5){$\sigma_1$};
    \node at (1.74,1.5){$\sigma_2$};
    \node at (2.38,1.5){$\sigma_3$};
    \node at (5.13,1.5){$\sigma_{n}$};
    
    %right
    \node at (9.2,0.7){{\color{RRR}$v_0$}};
    \node at (10.3,0.75){{\color{DB}$\sigma_2\cdot v_0$}};
    \node at (12.5,0.6){{\color{DG}$x$}};

    \end{tikzpicture}
\caption{Left: nested endpoints of attaching arcs. Right: unnested endpoints of attaching arcs.}\label{fig:weirdarcgraph}
\end{center}
\end{figure}

    We must now check that the two conditions for Putman's trick hold. Let $v\in V(\weirdarc(D_n))$ be an arc that surrounds $k \leq \frac{n}{2}$ punctures. We wish to show that there is an element of the orbit of $v$ under the mapping class group action that is in the connected component of $v_0$ in $\weirdarc(D_n)$. The mapping class group orbit of $v$ consists of all arcs that surround $k$ punctures (by the classification of surfaces). Such an arc is pictured as $x$ in the right of Figure~\ref{fig:weirdarcgraph}. In fact, we can guarantee that $x$ is disjoint from $v_0$ since $2\leq \frac{n}{2}$: there are $n-k$ punctures outside a model arc that surrounds $k$ punctures, and $v_0$ surrounds 2 of them.

    Next, we need to show that for all $\sigma_i \in Q^\pm,$ there is a path from $v_0$ to $q\cdot v_0.$ There are two cases here. If $i\neq 2$ (so $\sigma_i$ is a half twist on the punctures $v_0$ surrounds or does not surround, but not a combination of both), then $\sigma_i\in \text{Stab}(v_0)$, so the path from $v_0$ to $\sigma_i\cdot v_0$ is just $v_0.$ Otherwise, $\sigma_2\cdot v_0$ is distance 2 from $v_0,$ as shown in the right hand side of Figure~\ref{fig:weirdarcgraph}. 
    
  Thus $\weirdarc(D_n)$ is connected by the Putman trick. 
\end{proof}

\pit{Back to the sharing pairs proof} With the above in mind, we are ready to complete the proof that sharing pairs are preserved by automorphisms of the fine curve graph.

\begin{proof}[Proof of Proposition~\ref{prop:sharingpairs}]
    If $A=\{a_1,a_2\}$ and $B=\{b_1,b_2\}$ form a standard sharing pair, we are done by Lemma~\ref{lemma:standardsharingpairs}. Otherwise, we will show that there is a sequence $X^i=\{x^i_1,x^i_2\}$ of standard sharing pairs such that $X^0=A,$ $X^p=B,$ and $X^i$ and $X^{i+1}$ form a standard sharing pair for all $0\leq i\leq p-1.$ Let $e$ be the inessential curve encoded by $A$ and $B.$ 
    
    It is enough to show that there is a sequence $\alpha_0,\ldots,\alpha_p$ of properly embedded arcs in $S$ with the open disk bounded by $e$ removed such that $\alpha_i$ is disjoint from $\alpha_{i+1}$ for $0\leq i\leq p-1$, $\alpha_0$ is the attaching arc for $A$, $\alpha_p$ is the attaching arc for $B,$ and the endpoints of all arcs are in $e.$

    First, consider the graph $\weirdarc(D_n),$ where $D_n$ is $S$ with the open disk bounded by $e$ removed. There are representatives of all isotopy classes of arcs such that all arcs are in pairwise minimal position. Consider the representatives for the attaching arc of $A$ (call this $\Tilde{a}$) and the attaching arc of $B$ (call this arc $\Tilde{b}$). Since $\weirdarc(D_n)$ is connected, there is a sequence of arcs that connects $\Tilde{a}$ to $\Tilde{b}.$ 

    It remains to show that two arcs in the same isotopy class in $D_n$  whose endpoints are in $\partial D_n$ can be connected by a sequence of arcs that are all pairwise homotopic. This proof is identical to the proof of Proposition 3.1 of Long--Margalit--Pham--Verberne--Yao \cite{LMPVY}. (We note that the reason their proof does not work for surfaces with punctures is due to arcs whose endpoints are at punctures because an arc can swirl around a puncture infinitely many times if it ends there.) 

    We use the above paths of arcs to construct a sequence of standard sharing pairs between $A$ and $B,$ as desired.
\end{proof}

\section{Proof of Theorem~\ref{maintheorem}}

In this section, we complete the proof of the main theorem: the natural map $\Homeo(S)\to \Aut \fine(S)$ is an isomorphism.

\begin{proof}[Proof of Theorem~\ref{maintheorem}]
    Our proof is informed by the following commutative diagram:

\begin{center}
    \begin{tikzcd}
 \Aut \fine(S) \arrow{r}{\Psi} & \Aut \mathcal{E}\fine(S) \arrow{r}{\nu^{-1}} &\Homeo(S) \arrow[bend left]{ll}{\Phi}
 \end{tikzcd}
 \end{center}

 We recall that $\Phi$ is the natural map from the theorem statement and $\nu$ is the map from Theorem~\ref{theorem:efcg}. In Step 1 below, we define the map $\Psi.$ In Step 2 below, we show that $\nu^{-1}\circ \Psi\circ \Phi=\id,$ completing the proof that $\Phi$ is an isomorphism.

 \pit{Step 1: defining $\Psi$} Let $\varphi\in \Aut\fine(S)$. Since $\fine(S)$ is a subgraph of $\mathcal{E}\fine(S),$ we wish to extend $\varphi$ to an automorphism $\phi$ of $\mathcal{E}\fine(S)$ so that $\Psi(\varphi)=\phi.$ To do so, we first define $\phi(x)=\varphi(x)$ for all essential curves $x.$ 
 
 Let $e$ be an inessential curve; we define $\phi(e)$ as follows. Let $\{a,b\}$ be a bigon pair that encodes $e.$ Then $\{\varphi(a),\varphi(b)\}$ encodes an inessential curve $e'.$ Define $\phi(e)=e'.$ This is well-defined since all bigon pairs that encode $e$ have images that encode $e';$ this is because they form sharing pairs, which are preserved by Proposition~\ref{prop:sharingpairs}. 

 To conclude our proof that $\phi$ is an automorphism of $\mathcal{E}\fine(S),$ we must show that $\phi$ preserves adjacency (edges); non-adjacency will follow by considering the inverse map. Our proof is identical to that of Long--Margalit--Pham--Verberne--Yao, but we reiterate it for completeness. There are three types of edges based on what types of curves they connect, which we treat case-by-case below.

\begin{adjustwidth}{1cm}{}
\pit{Case 1: Essential-Essential} These edges are preserved by the definition of $\phi,$ which comes from $\varphi.$

\pit{Case 2: Essential-Inessential} In this case, the complementary components of the essential curve both have at least 2 punctures, so we can find a bigon pair that encodes the inessential curve that is also disjoint from the essential curve.

\pit{Case 3: Inessential-Inessential} Two inessential curves $e$ and $f$ are disjoint if and only if for all bigon pairs $\{a,b\}$ that encode $e$, we can construct a bigon pair $\{c,d\}$ that encodes $f$ such that both $c$ and $d$ are disjoint from $a.$ (This is up to relabeling of $e$ and $f$.)

Suppose $e$ and $f$ are disjoint. If they are not nested (so neither is contained in the disk bounded by the other), we use the fact that the component of $S\setminus(a\cup b)$ that contains $f$ has at least 2 punctures, so we can construct a bigon pair $\{c,d\}$ in this component, and therefore disjoint from both $a$ and $b.$ If $e$ and $f$ are nested, we must take $e$ to be the outer curve. To construct a bigon pair $\{c,d\}$ encoding $f,$ route the curves through the part of $e$ that is part of $e\cap b$, and then follow the remainder of $b$ to close up the curve.

Conversely, suppose $e$ and $f$ are not disjoint, and let $x\in e\cap f.$ Define a bigon pair $\{a,b\}$ such that $x\in a\cap b$ (so it is a ``corner'' of the bigon). Since every bigon pair that encodes $f$ intersects $x,$ it must also intersect both $a$ and $b.$
\end{adjustwidth}

\pit{Step 2: $\nu^{-1}\circ \Psi\circ \Phi=\id$} We first note that $\Phi$ is injective; the proof is the same as that of Long--Margalit--Pham--Verberne--Yao \cite[Lemma 5.1]{LMPVY}. 

Let $\psi\in \Homeo(S)$ and define $\varphi_\psi = \Phi(\psi)$ to be the automorphism of $\fine(S)$ that acts the same way on curves in $S$ as $\psi.$ Let $\phi_\psi = \Psi(\varphi_\psi)$ be the extension to the automorphisms of $\mathcal{E}\fine(S).$ Since there is only one homeomorphism that acts in the same manner on essential curves as $\phi_\psi$ (because $\Phi$ is injective), we have that $\nu^{-1}(\phi_\psi)=\psi.$ We therefore have that
\[\nu^{-1}\circ \Psi\circ \Phi(\psi)=\nu^{-1}\circ\Psi(\varphi_\psi) = \nu^{-1}(\phi_\psi) = \psi,\]
as desired.
\end{proof}

\printbibliography

\end{document}